\newtheorem{theorem}{Theorem}[section]
\newtheorem{lemma}[theorem]{Lemma}
\newtheorem{proposition}[theorem]{Proposition}
\newtheorem{corollary}[theorem]{Corollary}
\theoremstyle{definition}
\newtheorem{definition}[theorem]{Definition}
\newtheorem{remark}[theorem]{Remark}
\newtheorem{example}[theorem]{Example}
\numberwithin{equation}{section}
\newcommand*\re{\mathbb{R}}
\newcommand*\rn{\mathbb{R}^n}
\newcommand*\sph{\mathbb{S}}
\begin{document}
\title[Fractional Sobolev--Poincar\'e and (localized) Hardy inequalities]{Characterization of fractional Sobolev--Poincar\'e and (localized) Hardy inequalities}
\author{Firoj Sk}
\address{Analysis And Partial Differential Equations Unit, Okinawa Institute Of Science And Technology, 1919-1 Tancha, Onna-Son, Okinawa 904-0495, Japan.}
\email{firojmaciitk7@gmail.com}

\keywords{Fractional Sobolev--Poincar\'e inequality; fractional $(q,p)$-Poincar\'e inequality; fractional Hardy inequality; pointwise Hardy inequality; maximal function; capacity; quasi continuous; fat set.}
\smallskip

\subjclass{46E35; 35A23; 42B25; 31B15.}
\maketitle
\begin{abstract}
In this paper, we prove capacitary versions of the fractional Sobolev--Poincar\'e inequalities. We characterize localized variant of the boundary fractional Sobolev--Poincar\'e inequalities through uniform fatness condition of the domain in $\rn$. Existence type results on the fractional Hardy inequality in the supercritical case $sp>n$ for $s\in(0,1)$, $p>1$ are established. 
\end{abstract}
\smallskip

\section{Introduction and main results}
The central aim of this paper is to study the Sobolev--Poincar\'e inequality, pointwise Hardy inequality and the Hardy inequality under some  assumptions on the domain in  the case of the fractional Sobolev spaces. Precise condition on the domain will be clarified later. It is well known that the classical Sobolev--Poincar\'e inequality states that for a bounded domain $\Omega\subset\rn$ with $C^1$ boundary and  $1\leq p<n$, there exists a constant $C=C(n,p)>0$ such that 
\begin{equation}\label{SPI}
  \left(\int_{\Omega}|u(x)-u_\Omega|^{p^*}dx\right)^{1/p^*}\leq C\left(\int_{\Omega}|\nabla u(x)|^{p}dx\right)^{1/p} \text{ for all }u\in W^{1,p}(\Omega), 
\end{equation}
where $p^*=\frac{np}{n-p}$ denotes the Sobolev critical exponent and the space $W^{1,p}(\Omega)$ is the usual classical Sobolev space, see for example \cite[Chapter~4]{EvGa} in the case of ball. A capacitary variant of the Sobolev--Poincar\'e inequality \cref{SPI} were considered in \cite{KilKos} and for weighted case, see \cite{Mi}. The well known classical (boundary) Hardy inequality states that for a bounded Lipschitz domain $\Omega\subset\rn$, $1\leq p<\infty$, there exists a constant $C=C(n,p,\Omega)>0$ such that for any $u\in C_c^\infty(\Omega)$
\begin{equation}\label{local HI}
    \int_{\Omega}\frac{|u(x)|^p}{\delta(x)^p}dx\leq C\int_{\Omega}|\nabla u(x)|^pdx,
\end{equation}
where $\delta(x):=\text{dist}(x,\partial\Omega)$. The existence of the Hardy inequality \cref{local HI} for every open set $\Omega\subset\rn$ when $p>n$ has been investigated independently by \cite{Lewis} and \cite{Wannebo}. Also observe that both references deal with the case $p\leq n$, as well, where the validity of \cref{local HI} has been established through the uniform fatness condition of the complement $\Omega^c$. One can obtain the classical Hardy inequality \cref{local HI} by applying appropriately the Hardy--Littlewood--Wiener maximal function theorem on a pointwise Hardy inequality, see \cite{Pi, KiLeVa, KiMa} where they have introduced pointwise Hardy inequality through a maximal operator. Necessary and sufficient conditions are provided for pointwise Hardy inequalities in \cite{KoLeTu} and see \cite{Le} for weighted case.
 
Let $\Omega\subseteq\rn$ be any open set, and let $0<s<1$, $1\leq p<\infty$, the fractional Sobolev space $W^{s,p}(\Omega)$ is defined as
$$	
W^{s,p}(\Omega):=\left\{u\in L^p(\Omega):[u]_{s,p,\Omega}<\infty\right\},
$$
endowed with the so-called fractional Sobolev norm, given by 
$$
||u||_{s,p,\Omega}:=\left(\|u\|_{L^p(\Omega)}^p+[u]_{s,p,\Omega}^p
\right)^\frac{1}{p},
$$
where
$$
[u]_{s,p,\Omega}^p:=
\int_{\Omega}\int_{\Omega}\frac{|u(x)-u(y)|^p}{|x-y|^{n+sp}}dxdy, 
$$
is the Gagliardo seminorm. For the study of fractional Sobolev spaces in a systematic way we refer to \cite{BrGoVa, BrSa, DiPaVa, DyKi1, FiSeVa} and references therein. At this stage, we consider two more Banach spaces $W^{s,p}_0(\Omega)$ and $W^{s,p}_{\Omega}(\rn)$ defined as the closure of the space $C_c^\infty(\Omega)$ with the norms $||\cdot||_{s,p,\Omega}$ and $||\cdot||_{s,p,\rn}$ respectively. These two spaces arise naturally in studying weak solutions of the Dirichlet problems involving regional fractional $p$-Laplacian and fractional $p$-Laplacian operators respectively, see \cite{BrLiPa, BrPaSq, Chen, Fall, SeVa} and references therein.  
If $\Omega$ is a bounded Lipschitz domain and $1<p<\infty$, then one has
$$
W^{s,p}_{\Omega}(\rn)=\{u\in W^{s,p}(\rn):u=0\text{ in }\Omega^c\},
$$
see \cite[Proposition~B.1]{BrPaSq}. Moreover, $W^{s,p}_{\Omega}(\rn)=W^{s,p}_0(\Omega)$ provided $sp\neq1$, see for instance \cite[Proposition~B.1]{BrLiPa}.

In the spirit of local case, we introduce what we call variational Sobolev capacity in fractional Sobolev spaces.
\begin{definition}
   Let $0<s<1$, $p\in[1,\infty)$ and $\Omega\subseteq\rn$ be an open set. For a compact set $K\subset\Omega$, variational $(s,p)$-Sobolev capacity is defined by 
   \begin{equation}\label{def: capacity}
       \text{Cap}_{s,p}(K,\Omega):=\inf\left\{[u]_{s,p,\Omega}^p: u\in C_c^\infty(\Omega),\; u\geq 1\text{ on }K\right\}.
   \end{equation}
 For an open set $A\subset\Omega$, variational $(s,p)$-Sobolev capacity is defined by
 $$
 \text{Cap}_{s,p}(A,\Omega)=\sup\left\{\text{Cap}_{s,p}(K,\Omega):K\subset A,\; K \text{ is compact }\right\},
 $$ and for an arbitrary set $E\subset\Omega$, variational $(s,p)$-Sobolev capacity is defined by
 $$
 \text{Cap}_{s,p}(E,\Omega)=\inf\left\{\text{Cap}_{s,p}(A,\Omega):E\subset A,\; A \text{ is open }\right\}.
 $$
 \noindent Using standard approximation argument, we can replace $C_c^\infty(\Omega)$ by a bigger space $W^{s,p}_0(\Omega)\cap C(\Omega)$ in the definition of capacity \cref{def: capacity}.
\end{definition} 
\begin{remark}\label{restriction of function in cap def}
It is worth mentioning that in the definition of capacity \eqref{def: capacity} one can restricts the function $u\in C_c^\infty(\Omega)$ such that $u=1$ in a neighbourhood $\mathcal{N}(K)\subset\Omega$ of $K$ and $0\leq u\leq1$ in $\Omega$, see for instance \cite[Theorem~2.1]{ShXi}.
\end{remark}
\begin{definition}
    Let $0<s<1$, $p\in[1,\infty)$. We say that a property holds $(s,p)$-quasi everywhere (in short $(s,p)$-q.e.) if it holds except for a set of capacity zero.\\
   \smallskip
   We say a function $u:\Omega\to\re$ is $(s,p)$-quasi continuous (in short $(s,p)$-q.c.) in $\Omega$ if for every $\epsilon>0$ there exists an open set $E\subset\Omega$ such that $\text{Cap}_{s,p}(E,\Omega)<\epsilon$ and $u|_{\Omega\setminus E}$ is continuous.
\end{definition}

\begin{remark}\label{level set open}
 We observe that, for any  $\lambda\in\re$,  the set $\{x\in\Omega: u(x)\neq\lambda\}\cup E$ is open in $\Omega$ and hence the set $\{x\in\Omega: u(x)=\lambda\}\cap E^c$ is closed in $\Omega$, although $\{x\in\Omega: u(x)\neq\lambda\}$ need not be open. Indeed, by definition of the $(s,p)$-quasi continuous, $u|_{\Omega\setminus E}$ is continuous. Thus, the set $\{x\in\Omega: u(x)\neq\lambda\}\setminus E$ is open in $\Omega\setminus E$ with respect to the relative topology. Therefore, there exists an open set $O$ in $\Omega$ such that $\{x\in\Omega: u(x)\neq\lambda\}\setminus E=O\setminus E$ and this implies  $\{x\in\Omega: u(x)\neq\lambda\}\cup E=O\cup E$ is open in $\Omega.$ In particular, from this observation we have $Z(u;E^c)=\{x\in\Omega: u(x)=0\}\cap E^c$ is a closed set in $\Omega.$
\end{remark}

\begin{remark}\label{qc represent}
It is important to note that from \cite[Theorem~2.2]{ShXi1}, for a compact set $K\subset\Omega$ we have the following characterization for $\text{Cap}_{s,p}(K,\Omega)$ via $(s,p)$-q.e. property
$$
\text{Cap}_{s,p}(K,\Omega)=\inf\left\{[u]_{s,p,\Omega}^p: u\in W^{s,p}_0(\Omega),\; u\geq 1\; (s,p)\text{-q.e. on }K\right\}.
$$

\end{remark}
\noindent In recent years, many researchers have shown their interest in studying variational Sobolev capacities, see \cite{Adams, AdHe, AdXi, HeKiMa} for the case of classical Sobolev spaces and \cite{BrSa, ShXi, ShXi1, ShZh, Warma} for the case of fractional Sobolev spaces.
\smallskip

Before outlining the main results in the present paper in a precise manner, we need to introduce some terminologies and definitions.
Let $0\leq\alpha<1$ and $R>0$. For a locally integrable function $u$, the fractional maximal function is defined by 
\begin{equation*}\label{fractional maximal function}
    M_{\alpha,R}(u)(x):=\sup\limits_{0<r<R}r^{\alpha}\fint_{B_r(x)}|u(y)|dy.
\end{equation*}
If $R=\infty$, then we shall simply write $ M_{\alpha,R}$ by $ M_{\alpha}$ and for $\alpha=0,\;R=\infty$, we have the usual maximal function. Let $0<\beta<\infty$, the fractional sharp maximal function of a locally integrable function $u$ is defined by
\begin{equation*}\label{fractional sharp maximal function}
 u^{\#}_{\beta,R}(x):=\sup\limits_{0<r<R}r^{-\beta}\fint_{B_r(x)}|u(y)-u_{B_r(x)}|dy.   
\end{equation*}
 If $R=\infty$, then we shall simply write $u^{\#}_{\beta,R}$ by $u^{\#}_{\beta}.$
\smallskip
\begin{definition}[\textbf{Pointwise fractional $p$-Hardy inequality}]
   Let $0<s<1$, $0\leq\alpha<1$ and $p\in[1,\infty)$. We say that an open set $\Omega\subsetneq\rn$ with non-empty boundary admits \textit{pointwise fractional $p$-Hardy's inequality} if there exist constants $C>0$ and $\sigma\geq 1$ such that
   \begin{equation}\label{def: ptwise frac HI}
       |u(x)|\leq C\delta(x)^{s-\frac{\alpha}{p}}\left(M_{\alpha,\;\sigma\delta(x)}(|D^s_p u|)^p(x)\right)^{1/p} \text{ for all }u\in C_c^\infty(\Omega),
   \end{equation}
   and for all $x\in\Omega$, where $|D^s_p u|(x):=\left(\displaystyle\int_{\rn}\frac{ |u(x)-u(y)|^p}{|x-y|^{n+sp}}dy\right)^{\frac{1}{p}}.$
 \end{definition}
 \begin{definition}[\textbf{Uniformly $(s,p)$-fat set}]
   Let $0<s<1$ and $1< p<\infty$ with $sp>1.$ We say that a closed set $E\subset\rn$ is \textit{uniformly $(s,p)$-fat set} if there exists a constant $\gamma>0$ such that $$\text{Cap}_{s,p}(E\cap\overline{B_r(x)},2B_r(x))\geq \gamma\;\text{Cap}_{s,p}(\overline{B_r(x)},2B_r(x)), \text{ for all } x\in E \text{ and } r>0.$$  
\end{definition}
 Let us now describe our results in this paper before formulating these. \Cref{FSPI} is a capacitary version of the fractional Sobolev--Poincar\'e inequality, which is motivated by the result of \cite{KilKos}, and whereas, \cref{Boundary FSPI} gives a characterization of uniformly $(s,p)$-fat set through a boundary fractional Sobolev--Poincar\'e type inequality. As an application of \cref{Boundary FSPI}, at the end of \cref{Section3}, we provide various classes of domains that are uniformly $(s,p)$-fat set. The existence issue regarding  fractional $p$-Hardy's inequality \cref{hardy inq} in the supercritical case $sp>n$ for any proper open set is addressed in \cref{Fractional Hardy Inequality}.
 This result can be obtained by proving an appropriate pointwise fractional Hardy type inequality and applying the maximal function theorem.

Our main results are stated below.

\begin{theorem}
\label{FSPI}
 Let $0<s<1, \,p\in(1,\infty)$ with $sp>1$ and suppose $u\in W^{s,p}(B)$ be a $(s,p)$-quasi continuous function, where $B=B_r(x_0)\subset\rn$ is an open ball of radius $r>0.$ Let $1\leq q\leq p^*_s$ for $sp<n$ and $1\leq q<\infty$ for $sp\geq n.$ Then there exists a constant $C=C(n,s,p,q)>0$ such that 
    \begin{equation*}
        \left(\fint_{B}|u(x)|^{q}dx\right)^{\frac{1}{q}}\leq C\left(\frac{ 1}{\text{Cap}_{s,p}\left(Z(u;E^c)\cap\frac{1}{2}\overline{ B},B\right)}\int_{B}\int_{B}\frac{|u(x)-u(y)|^p}{|x-y|^{n+sp}}dxdy\right)^{\frac{1}{p}},
    \end{equation*} 
    where the closed set $Z(u;E^c)$ as in \cref{level set open}. 
\end{theorem}
\begin{theorem}
\label{Boundary FSPI}
    Let $0<s<1$, $p\in(1,\infty)$ with $sp>1$ and let $\Omega$ be any proper open set in $\rn$. Let $1\leq q\leq p^*_s$ for $sp<n$ and $1\leq q<\infty$ for $sp\geq n.$ Then $\rn\setminus\Omega$ is uniformly $(s,p)$-fat set with a constant $\gamma$ if and only if for any $z\in\rn\setminus\Omega$, $r>0$
    \begin{equation}\label{fat iff FSPI}
   \left(\fint_{B_r(z)}|u(x)|^{q}dx\right)^{\frac{1}{q}}\leq C\,\gamma^{-\frac{1}{p}}\, r^{s-\frac{n}{p}}\left(\int_{B_r(z)}\int_{B_r(z)}\frac{|u(x)-u(y)|^p}{|x-y|^{n+sp}}dxdy\right)^{\frac{1}{p}}\;
    \end{equation}
     for all $u\in C_c^\infty(\Omega)$, and where $C=C(n,s,p,q)$ is a constant. 
\end{theorem}

\begin{theorem}
\label{Fractional Hardy Inequality}
Let $\Omega$ be any open set in $\rn$ with $\Omega\neq\rn$. Let $0<s<1$ and $p>1$ such that $sp>n$. Then there exists a constant $C=C(n,s,p)>0$ such that the fractional Hardy inequality holds that is
\begin{equation*}\label{hardy inq}
\int_{\Omega}\frac{ |u(x)|^p}{\delta(x)^{sp}}dx\leq C\int_{\rn}\int_{\rn}\frac{ |u(x)-u(y)|^p}{|x-y|^{n+sp}}dxdy,\;\text{ for all }u\in W^{s,p}_{\Omega}(\rn).
\end{equation*} Furthermore, the regional fractional Hardy inequality holds that is
\begin{equation*}
\int_{\Omega}\frac{ |u(x)|^p}{\delta(x)^{sp}}dx\leq C\int_{\Omega}\int_{\Omega}\frac{ |u(x)-u(y)|^p}{|x-y|^{n+sp}}dxdy,\;\text{ for all }u\in W^{s,p}_{0}(\Omega).
\end{equation*}
\end{theorem}
Recently, in \cite{DyLeVa} the authors studied capacitary versions of fractional Poincar\'e, pointwise, and localized fractional Hardy inequalities in a metric measure space. However, their results involve the Assouad codimension of the domain, and certain restrictions on  functions. The study of Hardy inequalities in fractional Sobolev spaces has emerged as an intriguing research area in recent times. There is numerous literature available on this topic. For details discussion on the sharp constants in fractional Hardy inequalities, we refer to \cite{BiBrZa, BoDy, DyKi,  LoSl} and references therein.

This paper organized in the following way: In \cref{Section2} we collect some known results and discussed some necessary preliminaries. Proofs of \cref{FSPI,Boundary FSPI,Fractional Hardy Inequality} along with some further results are given in \cref{Section3}.

\section{Preliminaries and Known results}\label{Section2}
Throughout the paper we shall assume the following notations, unless mentioned otherwise explicitly:
\begin{itemize}
    \item $\Omega$ is an open connected set in $\rn$, $0<s<1$, $1\leq p<\infty,\;n\in\mathbb{N}.$
    \smallskip 
    \item $p^*_s=\frac{np}{n-sp}$ is the fractional Sobolev critical exponent for $ sp<n$. 
    \smallskip
    \item $\overline{\Omega}$ is the closure of $\Omega.$
    \smallskip
    \item $|\Omega|$ is the Lebesgue measure of $\Omega.$
    \smallskip
    \item $u_{\Omega}=\fint_{\Omega}udx=\frac{1}{|\Omega|}\int_{\Omega}udx$ is the average of the function $u$ in $\Omega.$
    \smallskip
    \item $X^c$ is the complement of the set $X$ in the appropriate ambient space.
    \smallskip
    \item $B_r(x)$ is an open ball centered at $x$ of radius $r>0$.
    \smallskip
    \item $\sph^{k-1}$ is the unit sphere in $\re^{k}.$
    \smallskip
    \item $c,\;C,\;C(*,*,\cdots, *)>0$ denote generic constants that will appear in the estimate and need not be the same as in the preceding steps; the value depends on the quantities indicated by  $*$'s. 
\end{itemize}
We start with some known results and some technical lemmas that will be required to prove our results. 
\begin{lemma}\label{cap properties}
   Let $s\in(0,1)$, $1< p<\infty$ with $sp>1$. Then the following properties of capacity hold: 
   \begin{enumerate}
       \item[$a)$]\textbf{(Ball estimate:)}  $\text{Cap}_{s,p}\left(\overline{B_r(x)},2B_r(x)\right)=C(n,s,p)\; r^{n-sp}$, for a constant $C(n,s,p)>0.$
       \smallskip
       \item[$b)$]\textbf{(Monotonicity:)}
            If $K_1\subseteq K_2\subset\Omega$, where $K_i$'s are compact sets, one has
       $$
       \text{Cap}_{s,p}(K_1,\Omega)\leq\text{Cap}_{s,p}(K_2,\Omega).
       $$
   \end{enumerate}
\end{lemma}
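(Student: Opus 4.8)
I would treat the two claims separately. Part $(b)$, \emph{monotonicity}, is immediate from the definition: if $K_1\subseteq K_2\subset\Omega$ are compact, then every $u\in C_c^\infty(\Omega)$ with $u\ge 1$ on $K_2$ also satisfies $u\ge 1$ on $K_1$, so the admissible class for $K_1$ contains that for $K_2$; taking infima over a larger class can only decrease the value, i.e. $\text{Cap}_{s,p}(K_1,\Omega)\le\text{Cap}_{s,p}(K_2,\Omega)$. Part $(a)$, the \emph{ball estimate}, I would reduce, using translation- and dilation-invariance of the Gagliardo seminorm, to showing that a single normalized capacity $C:=\text{Cap}_{s,p}(\overline{B_1(0)},B_2(0))$ satisfies $0<C<\infty$.

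For the reduction, first translate: $u\mapsto u(\cdot+x)$ is a bijection of the competitors for $\overline{B_r(x)}\subset 2B_r(x)$ onto those for $\overline{B_r(0)}\subset 2B_r(0)$ preserving $[\cdot]_{s,p}$, so $\text{Cap}_{s,p}(\overline{B_r(x)},2B_r(x))=\text{Cap}_{s,p}(\overline{B_r(0)},2B_r(0))$. Next dilate: for $u\in C_c^\infty(B_{2r}(0))$ with $u\ge 1$ on $\overline{B_r(0)}$, set $v(z):=u(rz)$, so $v\in C_c^\infty(B_2(0))$, $v\ge 1$ on $\overline{B_1(0)}$, and the substitution $x=rz$, $y=rw$ in the defining double integral gives $[v]_{s,p,B_2}^p=r^{sp-n}[u]_{s,p,B_{2r}}^p$. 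Since $u\mapsto v$ is a bijection between the two competitor classes, taking infima yields
\[
\text{Cap}_{s,p}\bigl(\overline{B_r(x)},2B_r(x)\bigr)=C\,r^{\,n-sp},\qquad C:=\text{Cap}_{s,p}\bigl(\overline{B_1(0)},B_2(0)\bigr),
\]
which is the asserted identity, modulo $0<C<\infty$.

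Finiteness of $C$ is easy: fixing $\phi\in C_c^\infty(B_2(0))$ with $0\le\phi\le 1$ and $\phi\equiv1$ on $\overline{B_1(0)}$, one has $C\le[\phi]_{s,p,B_2}^p$, and splitting this integral at $|x-y|=1$ and using $|\phi(x)-\phi(y)|\le\|\grad\phi\|_\infty|x-y|$ on $\{|x-y|<1\}$ bounds it by a constant times $\int_0^1\rho^{\,p-1-sp}\,d\rho<\infty$ (finite since $s<1$) plus a trivially bounded tail. Positivity is the substantive point. For a competitor $u$, truncating to $\min\{\max\{u,0\},1\}$ (which does not increase $[u]_{s,p,B_2}$ and preserves $u\equiv1$ on $\overline{B_1}$) we may assume $0\le u\le1$. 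By the fractional Poincar\'e inequality \cref{frac Poincare-W} on $B_2$, $\int_{B_2}|u-u_{B_2}|^p\,dx\le c_1[u]_{s,p,B_2}^p$ with $c_1=c_1(n,s,p)$; if $u_{B_2}\le\tfrac12$ then $|u-u_{B_2}|\ge\tfrac12$ on $B_1$, so $[u]_{s,p,B_2}^p\ge(2^pc_1)^{-1}|B_1|$. The remaining case $u_{B_2}>\tfrac12$ — the main obstacle — is handled by exploiting the compact support of $u$: a function vanishing near $\partial B_2$ whose mean exceeds $\tfrac12$ must sustain a transition layer whose Gagliardo energy is bounded below uniformly, which one quantifies via the tail comparison $\int_{\rn\setminus B_2(0)}|x-y|^{-n-sp}\,dy\asymp\text{dist}(x,\rn\setminus B_2(0))^{-sp}$ together with a Hardy-type estimate on the ball (alternatively, positivity of the fractional Sobolev capacity of a ball is a recorded fact, cf. the references on fractional capacities cited earlier). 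Infimizing over $u$ gives $C>0$, and the constant, being independent of $u$, descends after undoing the dilation to the single constant in $\text{Cap}_{s,p}(\overline{B_r(x)},2B_r(x))=C\,r^{n-sp}$.
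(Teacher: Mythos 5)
Your part $b)$ is exactly the paper's argument (the admissible class for $K_1$ contains that for $K_2$), and for part $a)$ your translation/dilation reduction to the normalized quantity $C=\text{Cap}_{s,p}(\overline{B_1(0)},B_2(0))$, together with the finiteness bound via a fixed cutoff, is correct; this is in substance the paper's ``choosing the radius of the ball appropriately''. The paper, however, obtains the remaining and decisive point --- that the normalized capacity is a \emph{positive} finite constant --- by citing \cite[Theorem~2.2]{ShXi}, whereas you attempt to prove it and leave a genuine gap at precisely that point: your case $u_{B_2}>\tfrac12$ is only a heuristic (``transition layer with uniformly bounded-below energy''), and the tools you invoke do not apply here. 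The capacity in this paper is defined through the \emph{regional} energy $[u]_{s,p,B_2}^p=\int_{B_2}\int_{B_2}|u(x)-u(y)|^p|x-y|^{-n-sp}\,dx\,dy$, so the tail comparison $\int_{\rn\setminus B_2}|x-y|^{-n-sp}\,dy\asymp\mathrm{dist}(x,\partial B_2)^{-sp}$ concerns interactions that the admissible energy simply does not contain, and the Hardy-type inequality on the ball with the regional seminorm (Dyda, Loss--Sloane) is available only for $sp>1$.

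Worse, the missing step cannot be filled as sketched in the full range $s\in(0,1)$, $1\le p<\infty$: for $sp<1$ the uniform lower bound on the ``transition layer'' energy is false. Take $\eta_\varepsilon\in C_c^\infty(B_2(0))$ with $0\le\eta_\varepsilon\le1$, $\eta_\varepsilon=1$ on $B_{2-2\varepsilon}(0)$, $\mathrm{supp}\,\eta_\varepsilon\subset B_{2-\varepsilon}(0)$ and $|\grad\eta_\varepsilon|\le C/\varepsilon$; splitting the double integral according to whether a point lies in the outer annulus of width $\sim\varepsilon$ gives $[\eta_\varepsilon]_{s,p,B_2}^p\le C(n,s,p)\,\varepsilon^{1-sp}\to0$, and each $\eta_\varepsilon$ is admissible for $\overline{B_1(0)}$ in $B_2(0)$. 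So with the purely regional energy the infimum can be $0$ when $sp<1$; a positive lower bound requires either the restriction $sp>1$ (where your case analysis can indeed be closed with the regional Hardy inequality), or a capacity whose energy also sees the complement of the ball (the Gagliardo seminorm over $\rn$, or the full $W^{s,p}$-norm), which is the setting of the result the paper cites. Your fallback remark that positivity ``is a recorded fact'' is thus the same move the paper makes, but it is a citation rather than a proof, and as a self-contained argument your proposal does not establish the constant $C>0$ claimed in part $a)$.
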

\begin{proof}
     $a)$ It follows from \cite[Theorem~2.2]{ShXi} by choosing the radius of the ball appropriately.\\
     \smallskip
     $b)$ It is an immediate consequence of the definition of $\text{Cap}_{s,p}(\cdot,\Omega)$. 
\end{proof}
The proof of the following lemma can be found in \cite{LiWa}, however we include the proof of it for the sake of completeness. 
\begin{lemma}
\label{seminorm equivalent}
   Let $\Omega\subsetneq\rn$ be an open set, $s\in(0,1)$ and $p\in(0,\infty)$ such that $sp>1.$ Then 
  $$
  \int_{\rn}\int_{\rn}\frac{|u(x)-u(y)|^p}{|x-y|^{n+sp}}dxdy
  \leq C\int_{\Omega}\int_{\Omega}\frac{|u(x)-u(y)|^p}{|x-y|^{n+sp}}dxdy\;\text{ for all }u\in C^\infty_c(\Omega),
  $$
  where $C=C(n,s,p)$ is a positive constant and does not depend on the domain $\Omega$.
\end{lemma}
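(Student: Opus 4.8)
The plan is to exploit that $u$ vanishes on $\Omega^{c}:=\rn\setminus\Omega$, so that the only contribution to the left-hand side beyond the regional energy comes from the off-diagonal region $\Omega\times\Omega^{c}$, and then to absorb that boundary term into $[u]_{s,p,\Omega}^{p}$ via a fractional Hardy inequality. Since $u(y)=0$ for $y\in\Omega^{c}$ and the Gagliardo kernel is symmetric (the integral over $\Omega^{c}\times\Omega^{c}$ vanishes),
\[
[u]_{s,p,\rn}^{p}=[u]_{s,p,\Omega}^{p}+2\int_{\Omega}\int_{\Omega^{c}}\frac{|u(x)|^{p}}{|x-y|^{n+sp}}\,dy\,dx ,
\]
so it suffices to prove the one-sided fractional Hardy bound
\[
\int_{\Omega}\int_{\Omega^{c}}\frac{|u(x)|^{p}}{|x-y|^{n+sp}}\,dy\,dx\le C(s,p)\,[u]_{s,p,\Omega}^{p},\qquad u\in C_{c}^{\infty}(\Omega),
\]
and this is the step where the hypothesis $sp>1$ enters.

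To prove the displayed bound I would pass to polar coordinates centred at $x$: for each $x\in\Omega$,
\[
\int_{\Omega^{c}}\frac{dy}{|x-y|^{n+sp}}=\int_{\sph^{n-1}}\int_{\{r>0:\,x+r\omega\in\Omega^{c}\}}\frac{dr}{r^{1+sp}}\,d\omega\le\frac{1}{sp}\int_{\sph^{n-1}}\rho(x,\omega)^{-sp}\,d\omega ,
\]
where $\rho(x,\omega):=\sup\{r>0:[x,x+r\omega]\subset\Omega\}$. By Tonelli this reduces the problem to estimating $\int_{\sph^{n-1}}\int_{\Omega}|u(x)|^{p}\rho(x,\omega)^{-sp}\,dx\,d\omega$. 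For fixed $\omega$ I slice $\rn=\omega^{\perp}\oplus\re\omega$ and write $x=x'+t\omega$; the trace $f_{x',\omega}(t):=u(x'+t\omega)$ belongs to $C_{c}^{\infty}(I_{x',\omega})$ with $I_{x',\omega}=\{t:x'+t\omega\in\Omega\}$ open in $\re$, and $\rho(x,\omega)$ is no smaller than the one-dimensional distance of $t$ to $\re\setminus I_{x',\omega}$. Applying the one-dimensional regional fractional Hardy inequality (stated next) on each connected component of $I_{x',\omega}$ and summing yields $\int_{\Omega}|u(x)|^{p}\rho(x,\omega)^{-sp}\,dx\le C(s,p)\int_{\omega^{\perp}}[f_{x',\omega}]_{s,p,I_{x',\omega}}^{p}\,dx'$. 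Integrating in $\omega$ and undoing the slicing by the elementary identity $\int_{\sph^{n-1}}\int_{\re}h(r\omega)\,dr\,d\omega=2\int_{\rn}h(v)\,|v|^{1-n}\,dv$ recombines the sliced seminorms into $2\,[u]_{s,p,\Omega}^{p}$, closing the estimate of the boundary term.

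It remains to prove the one-dimensional ingredient: for $sp>1$, every open $I\subseteq\re$, and every $f\in C_{c}^{\infty}(I)$,
\[
\int_{I}\frac{|f(t)|^{p}}{\delta_{I}(t)^{sp}}\,dt\le C(s,p)\,[f]_{s,p,I}^{p},\qquad \delta_{I}(t):=\mathrm{dist}(t,\re\setminus I).
\]
Since both sides localize to the connected components $(a,b)$ of $I$, I reduce to a single interval and, on the half where $\delta_{(a,b)}(t)=t-a$, telescope towards the endpoint $a$ at which $f$ vanishes: with $t_{k}:=a+2^{-k}(t-a)$ one has $f(t)=\sum_{k\ge0}\bigl(f(t_{k})-f(t_{k+1})\bigr)$, and by the one-dimensional Gagliardo--Morrey embedding (valid because $sp>1$) each increment satisfies $|f(t_{k})-f(t_{k+1})|\le C(s,p)\,(t_{k}-t_{k+1})^{\,s-1/p}\,[f]_{s,p,(t_{k+1},t_{k})}$, the intervals $(t_{k+1},t_{k})$ being pairwise disjoint and filling $(a,t)$. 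Raising to the $p$-th power (using convexity of $\tau\mapsto\tau^{p}$ when $p\ge1$ and subadditivity when $0<p<1$), dividing by $(t-a)^{sp}$, integrating in $t$, and applying Fubini converts the estimate into a geometric series in $k$ with ratio $2^{-(s-1/p)}<1$ (respectively $2^{-(sp-1)}<1$) times $[f]_{s,p,(a,b)}^{p}$, which sums \emph{precisely because $sp>1$}. The half-line case $b=\infty$ is identical, and a component equal to all of $\re$ contributes nothing since there $\delta_{I}\equiv\infty$.

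Combining the three steps gives the lemma with $C=1+\tfrac{4\,C(s,p)}{sp}$, depending only on $s$ and $p$. The genuine obstacle is the one-dimensional Hardy inequality: if $sp\le1$ the telescoping series diverges and the inequality fails, so this step cannot be bypassed, whereas the rest is bookkeeping with polar coordinates and Fubini. (The one-dimensional inequality is classical, going back to Dyda, and the slicing reduction is the standard route from it to $\rn$; one could alternatively quote a regional fractional Hardy inequality directly and skip the last two paragraphs.)
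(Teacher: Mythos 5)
Your proof is correct and follows the same overall structure as the paper's: you first split $[u]_{s,p,\rn}^p$ into the regional seminorm $[u]_{s,p,\Omega}^p$ plus twice the off-diagonal term $\int_\Omega|u(x)|^p\bigl(\int_{\Omega^c}|x-y|^{-n-sp}\,dy\bigr)dx$, then pass to polar coordinates centred at $x$ and absorb the resulting Hardy-type term into $[u]_{s,p,\Omega}^p$. The sole difference is that after the polar-coordinate step the paper simply invokes the fractional Hardy inequality of Loss and Sloane \cite[Theorem~1.2]{LoSl} (which is exactly the estimate $\int_\Omega|u|^p\,m_{sp}(x)^{-sp}\,dx\le C\,[u]_{s,p,\Omega}^p$ you need), whereas you reprove that ingredient from scratch: you slice $\Omega$ along lines in direction $\omega$, invoke a one-dimensional regional Hardy inequality on each component, and then recombine via $\int_{\sph^{n-1}}\int_{\re}h(r\omega)\,dr\,d\omega=2\int_{\rn}h(v)|v|^{1-n}dv$; the one-dimensional inequality you then establish by dyadic telescoping towards the nearest endpoint together with the Morrey-type increment bound valid for $sp>1$. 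This is precisely the slicing mechanism underlying the Loss--Sloane proof, so what you gain is a self-contained argument at the cost of length; a reader who accepts \cite[Theorem~1.2]{LoSl} as a black box can skip your last two paragraphs, as you yourself point out. One small remark: although the lemma nominally allows $p\in(0,\infty)$, the hypotheses $s<1$ and $sp>1$ already force $p>1$, so the parenthetical about subadditivity of $\tau\mapsto\tau^p$ for $0<p<1$ never actually comes into play.
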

\begin{proof}
    Let $u\in C_c^\infty(\Omega)$. Then we have
    \begin{multline*}
        \int_{\rn}\int_{\rn}\frac{|u(x)-u(y)|^p}{|x-y|^{n+sp}}dxdy\\
        =\int_{\Omega}\int_{\Omega}\frac{|u(x)-u(y)|^p}{|x-y|^{n+sp}}dxdy+2\int_{\Omega}|u(x)|^p\left(\int_{\Omega^c}\frac{dy}{|x-y|^{n+sp}}\right)dx.
\end{multline*}
Note that, for $x\in\Omega$, we have
\begin{equation*}
    \int_{\Omega^c}\frac{dy}{|x-y|^{n+sp}}\leq\int_{\sph^{n-1}}dw\int_{d_{w,\Omega}(x)}^\infty\frac{dr}{r^{sp+1}}=\frac{1}{sp}\int_{\sph^{n-1}}\frac{dw}{d_{w,\Omega}(x)^{sp}}=\frac{C(n,s,p)}{m_{sp}(x)^{sp}},
\end{equation*}
where $d_{w,\Omega}(x)=\inf\{|t|:x+tw\in\Omega^c\}$ (see, \cite{LoSl}) and
$$
m_{sp}(x)^{sp}=\left.\frac{2\pi^{\frac{n-1}{2}}\Gamma\left(\frac{1+sp}{2}\right)}{\Gamma\left(\frac{n+sp}{2}\right)}\middle/\int_{\sph^{n-1}}\frac{dw}{d_{w,\Omega}(x)^{sp}}\right. .
$$
Thus, we obtain
\begin{multline*}
            \int_{\rn}\int_{\rn}\frac{|u(x)-u(y)|^p}{|x-y|^{n+sp}}dxdy\leq\int_{\Omega}\int_{\Omega}\frac{|u(x)-u(y)|^p}{|x-y|^{n+sp}}dxdy+2C(n,s,p)\int_{\Omega}\frac{|u(x)|^p}{m_{sp}(x)^{sp}}dx\\
            \leq C(n,s,p)\int_{\Omega}\int_{\Omega}\frac{|u(x)-u(y)|^p}{|x-y|^{n+sp}}dxdy,
\end{multline*}
where in the last step we have used the fractional Hardy inequality of Loss--Sloane \cite[Theorem~1.2]{LoSl}. Hence the lemma follows.
\end{proof}

\begin{lemma}\label{telescoping lemma}
Suppose $u\in L^1_{\text{loc}}(\rn)$ and let $0<\beta<\infty$. Then there is a constant $C=C(n,\beta)$ and a set $A\subset\rn$ with $|A|=0$ such that 
\begin{equation*}
    |u(x)-u(y)|\leq C|x-y|^{\beta}\left(u^{\#}_{\beta,4|x-y|}(x)+u^{\#}_{\beta,4|x-y|}(y)\right),\;\text{ for all }x,y\in\rn\setminus A.
\end{equation*}
\end{lemma}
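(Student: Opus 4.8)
The plan is to reduce the statement to Lebesgue points of $u$ and then run a dyadic telescoping (Campanato-type) argument. First I would take $A$ to be the complement of the set of Lebesgue points of $u$; since $u\in L^1_{\text{loc}}(\rn)$, the Lebesgue differentiation theorem gives $|A|=0$, so it suffices to prove the estimate for $x,y\in\rn\setminus A$. Fix two such points, set $r=|x-y|$, and observe that we may assume both $u^{\#}_{\beta,4r}(x)$ and $u^{\#}_{\beta,4r}(y)$ are finite, since otherwise the right-hand side is $+\infty$ and there is nothing to prove.

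The core step is to control $|u(x)-u_{B_r(x)}|$ by $u^{\#}_{\beta,4r}(x)$. For this I would use the chain of balls $B_i:=B_{2^{-i}r}(x)$, $i\ge 0$, so that $B_0=B_r(x)$ and $B_{i+1}\subset B_i$ with $|B_i|/|B_{i+1}|=2^n$. Since $x$ is a Lebesgue point, $u_{B_i}\to u(x)$, and hence
\[
|u(x)-u_{B_r(x)}|\le\sum_{i=0}^{\infty}|u_{B_{i+1}}-u_{B_i}|\le 2^n\sum_{i=0}^{\infty}\fint_{B_i}|u-u_{B_i}|\,dz .
\]
Each radius $2^{-i}r$ is strictly less than $4r$, so by the definition of the truncated fractional sharp maximal function $\fint_{B_i}|u-u_{B_i}|\,dz\le (2^{-i}r)^{\beta}u^{\#}_{\beta,4r}(x)$; summing the resulting geometric series gives $|u(x)-u_{B_r(x)}|\le C(n,\beta)\,r^{\beta}u^{\#}_{\beta,4r}(x)$ with $C(n,\beta)=2^n/(1-2^{-\beta})$, and the same bound holds at $y$.

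It then remains to bridge the two centered averages. Since $|x-y|=r$, the ball $B^{*}:=B_{2r}(x)$ contains both $B_r(x)$ and $B_r(y)$, and comparing averages over these nested balls exactly as above yields
\[
|u_{B_r(x)}-u_{B^{*}}|+|u_{B_r(y)}-u_{B^{*}}|\le 2\cdot 2^n\fint_{B^{*}}|u-u_{B^{*}}|\,dz\le 2^{n+1}(2r)^{\beta}u^{\#}_{\beta,4r}(x),
\]
where we used $2r<4r$. Combining this with the core estimate through the triangle inequality
\[
|u(x)-u(y)|\le|u(x)-u_{B_r(x)}|+|u_{B_r(x)}-u_{B^{*}}|+|u_{B^{*}}-u_{B_r(y)}|+|u_{B_r(y)}-u(y)|
\]
and collecting constants gives the claimed inequality with $C=C(n,\beta)$.

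I do not expect a genuine obstacle here; the argument is a routine telescoping estimate. The only points requiring attention are: keeping every ball occurring in the chains of radius strictly below $4|x-y|$, so that the truncated sharp maximal functions $u^{\#}_{\beta,4|x-y|}(x)$ and $u^{\#}_{\beta,4|x-y|}(y)$ genuinely dominate the increments; choosing the bridging ball $B^{*}$ large enough to contain both unit-scale balls while still of radius below $4|x-y|$; and invoking the Lebesgue differentiation theorem both to identify the exceptional null set $A$ and to justify the convergence $u_{B_i}\to u(x)$.
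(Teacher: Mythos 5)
Your proposal is correct and runs on essentially the same dyadic telescoping argument as the paper: both pass to the Lebesgue points of $u$, use the chain of balls $B_{2^{-i}r}(x)$ to control $|u(x)-u_{B_r(x)}|$ by a geometric series bounded by $r^{\beta}u^{\#}_{\beta,\cdot}(x)$, and then bridge the two centered averages through one larger ball containing both. The only cosmetic difference is the bookkeeping of scales: the paper sets $r=2|x-y|$ so that $y\in B_r(x)$, telescopes $|u(x)-u_{B_r(x)}|$ and $|u(y)-u_{B_r(x)}|$ directly and bridges the latter through $B_{2r}(y)\supset B_r(x)$, whereas you set $r=|x-y|$, telescope symmetrically at $x$ and at $y$ to the same radius, and insert the intermediate ball $B_{2r}(x)$; both land on the truncation $4|x-y|$ and the same constant structure $C(n,\beta)$.
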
	
\begin{proof}
Let $S_u$ be the set of all Lebesgue points of the function $u$ and set $A=S_u^c$. Then, by Lebesgue differentiation theorem we have $|A|=0$. Now fix $x\in S_u$, $0<r<\infty$ and we denote $B_k=B_{\frac{ r}{2^k}}(x)$, $k=0,1,2,\cdots$
\begin{multline*}
|u(x)-u_{B_{r}(x)}|
=\left|\lim\limits_{m\to\infty}\sum_{k=0}^{m}\left(u_{B_{k+1}}-u_{B_k}\right)\right|
\leq\sum_{k=0}^{\infty}\left|u_{B_{k+1}}-u_{B_k}\right|\\
=\sum_{k=0}^{\infty}\left|\frac{ 1}{|B_{k+1}|}\int_{B_{k+1}}u(z)dz-u_{B_k}\right|
\leq\sum_{k=0}^{\infty}\frac{ 1}{|B_{k+1}|}\int_{B_{k+1}}\left|u(z)-u_{B_k}\right|dz\\
\leq\sum_{k=0}^{\infty}\frac{ |B_k|}{|B_{k+1}|}\fint_{B_{k}}\left|u(z)-u_{B_k}\right|dz
=C(n)\sum_{k=0}^{\infty} \left(\frac{ r}{2^k}\right)^{\beta}\left(\frac{ r}{2^k}\right)^{-\beta}\fint_{B_{k}}\left|u(z)-u_{B_k}\right|dz\\
\leq C(n,\beta)r^{\beta}u^{\#}_{\beta,r}(x).
\end{multline*}
Let $y\in B_{r}(x)\setminus A$ and we have $B_r(x)\subset B_{2r}(y)$. Now 
\begin{multline*}
|u(y)-u_{B_{r}(x)}|\leq |u(y)-u_{B_{2r}(y)}|+|u_{B_{2r}(y)}-u_{B_{r}(x)}|\\
\leq C_1\;r^{\beta}u^{\#}_{\beta,2r}(y)+\fint_{B_r(x)}|u(z)-u_{B_{2r}(y)}|dz\\
\leq C_1r^{\beta}u^{\#}_{\beta,2r}(y)+C_2(n)\fint_{B_{2r}(y)}|u(z)-u_{B_{2r}(y)}|dz
\leq C(n,\beta)r^{\beta}u^{\#}_{\beta,2r}(y).
\end{multline*}
Let $x,y\in\rn\setminus A$ with $x\neq y$ and let $r=2|x-y|$. Then $x,y\in B_r(x)$ and hence we have 
\begin{equation*}
|u(x)-u(y)|\leq |u(x)-u_{B_{r}(x)}|+|u(y)-u_{B_{r}(x)}|\leq C(n,\beta)r^{\beta}\left(u^{\#}_{\beta,2r}(x)+u^{\#}_{\beta,2r}(y)\right).
\end{equation*}
This completes the proof of the lemma.
\end{proof}	
	
\begin{lemma}\label{reltn btw sharp and frac maximal}
    Let $0\leq\alpha<s<1,\;R>0$. Suppose $u\in W^{s,p}(\rn)$. Then for any $x\in\rn$ there is a constant $C=C(n,s,p)$ such that
\begin{equation*}
    u^{\#}_{s-\alpha,R}(x)\leq C M_{\alpha,R}\left(|D^s_p u|\right)(x),
\end{equation*}
where $|D^s_p u|(x)=\left(\displaystyle\int_{\rn}\frac{ |u(x)-u(y)|^p}{|x-y|^{n+sp}}dy\right)^{1/p}$.
\end{lemma}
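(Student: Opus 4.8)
The plan is to bound, for every fixed $x\in\rn$ and every radius $0<r<R$, the quantity $r^{-(s-\alpha)}\fint_{B_r(x)}|u(y)-u_{B_r(x)}|\,dy$ by a fixed constant multiple of $M_{\alpha,R}(|D^s_pu|)(x)$; taking the supremum over $r\in(0,R)$ then yields the asserted inequality for $u^{\#}_{s-\alpha,R}(x)$. The hypotheses $0\le\alpha<s<1$ enter only to make $s-\alpha>0$, so that $u^{\#}_{s-\alpha,R}$ is a genuine (positive-order) sharp maximal function.

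First I would reduce the oscillation to a double average: since $u_{B_r(x)}$ is the mean of $u$ over $B_r(x)$, Jensen's inequality gives
\[
\fint_{B_r(x)}|u(y)-u_{B_r(x)}|\,dy\le\fint_{B_r(x)}\fint_{B_r(x)}|u(y)-u(z)|\,dz\,dy .
\]
The core estimate is then, for fixed $y\in B_r(x)$, to control the inner average by $Cr^{s}|D^s_pu|(y)$. Since $y,z\in B_r(x)$ forces $|y-z|\le 2r$, I would write $|u(y)-u(z)|\le (2r)^{(n+sp)/p}\,|u(y)-u(z)|\,|y-z|^{-(n+sp)/p}$ and apply H\"older's inequality on $B_r(x)$ with exponents $p$ and $p'=p/(p-1)$ (the case $p=1$ being trivial):
\[
\int_{B_r(x)}\frac{|u(y)-u(z)|}{|y-z|^{(n+sp)/p}}\,dz\le\left(\int_{B_r(x)}\frac{|u(y)-u(z)|^p}{|y-z|^{n+sp}}\,dz\right)^{1/p}|B_r(x)|^{1/p'}\le |D^s_pu|(y)\,|B_r(x)|^{1/p'},
\]
where the last inequality just enlarges the domain of integration from $B_r(x)$ to $\rn$. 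Using $|B_r(x)|=c_nr^n$ and the elementary identity $\tfrac{n+sp}{p}-\tfrac{n}{p}=s$, this collapses to $\fint_{B_r(x)}|u(y)-u(z)|\,dz\le C r^{s}|D^s_pu|(y)$.

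Integrating this bound over $y\in B_r(x)$ and combining with the display above gives $\fint_{B_r(x)}|u(y)-u_{B_r(x)}|\,dy\le C r^{s}\fint_{B_r(x)}|D^s_pu|(y)\,dy$. Finally, because $0<r<R$, the definition of the fractional maximal function yields $\fint_{B_r(x)}|D^s_pu|(y)\,dy\le r^{-\alpha}M_{\alpha,R}(|D^s_pu|)(x)$, and hence
\[
r^{-(s-\alpha)}\fint_{B_r(x)}|u(y)-u_{B_r(x)}|\,dy\le C\,M_{\alpha,R}(|D^s_pu|)(x).
\]
Taking the supremum over $0<r<R$ completes the proof.

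As for difficulty, there is no serious obstacle here: the whole argument is a single application of H\"older's inequality after the observations that the oscillation is dominated by a double average and that the finite separation $|y-z|\le 2r$ permits inserting the Gagliardo kernel. The only point demanding care is the exponent bookkeeping --- it is precisely the identity $(n+sp)/p-n/p=s$ that makes the power of $r$ come out as $s$, so that after paying $r^{-\alpha}$ to the maximal function the prefactor $r^{-(s-\alpha)}$ is exactly absorbed. One should also record at the outset that $u\in W^{s,p}(\rn)$ guarantees $|D^s_pu|\in L^p(\rn)\subset L^1_{\mathrm{loc}}(\rn)$ by Fubini, so all the averages above are finite.
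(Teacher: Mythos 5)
Your proof is correct and follows essentially the same route as the paper: bound the oscillation by the double average over $B_r(x)\times B_r(x)$, insert and remove the Gagliardo kernel via H\"older, use $|y-z|\le 2r$ to produce the $r^s$ factor, then peel off $r^{\alpha}$ to feed the fractional maximal function. The paper simply presents the H\"older step without first writing out the Jensen reduction, but the computation and exponent bookkeeping are identical.
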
	
\begin{proof}
Using H\"older inequality we have 
\begin{multline*}
|u((y)-u_{B_r(x)}|=\left|u(y)-\frac{ 1}{|B_{r}(x)|}\int_{ B_{r}(x)}u(z)dz\right|\leq\frac{ 1}{|B_{r}(x)|}\int_{ B_{r}(x)}|u(y)-u(z)|dz\\
\leq\frac{ 1}{|B_{r}(x)|^{1/p}}\left(\int_{ B_{r}(x)}\frac{ |u(y)-u(z)|^p}{|y-z|^{n+sp}}|y-z|^{n+sp}dz\right)^{1/p}.
\end{multline*}
Integrating over the ball $B_r(x)$ we obtain
\begin{multline*}
\fint_{ B_{r}(x)}|u((y)-u_{B_r(x)}|dy\leq\frac{C r^{\frac{n+sp}{p}}}{|B_r(x)|^{1/p}}\fint_{ B_{r}(x)}\left(\int_{ B_{r}(x)}\frac{ |u(y)-u(z)|^p}{|y-z|^{n+sp}}dz\right)^{1/p}dy\\
\leq\frac{C r^{\frac{n+sp}{p}}}{|B_r(x)|^{1/p}}\fint_{ B_{r}(x)}|D^s_p u|(y)dy\leq C r^{s-\alpha}r^{\alpha}\fint_{ B_{r}(x)}|D^s_p u|(y)dy,
\end{multline*}
where $C=C(n,s,p)$. Thus we get 
\begin{equation*}
  r^{\alpha-s}\fint_{ B_{r}(x)}|u((y)-u_{B_r(x)}|dy\leq C r^{\alpha}\fint_{ B_{r}(x)}|D^s_p u|(y)dy,  
\end{equation*}
 and consequently we have 
 $$u^{\#}_{s-\alpha,R}(x)\leq C M_{\alpha,R}\left(|D^s_p u|\right)(x),\text{ for any }x\in\rn\;\text{ and }R>0.$$
\end{proof}	
By the above two lemmas we immediately get the following result.
\begin{corollary}\label{useful result}
Let $u\in W^{s,p}(\rn)$ and $0\leq\alpha<s<1$. Then, there is a set $A\subset\rn$ with $|A|=0$ such that for all $x,y\in\rn\setminus A$ we have
\begin{equation}\label{maximal function inq}
  |u(x)-u(y)|\leq C|x-y|^{s-\alpha}\left(M_{\alpha,4|x-y|}(|D^s_p u|)(x)+M_{\alpha,4|x-y|}(|D^s_p u|)(y)\right),  
\end{equation}
where $C=C(n,s,\alpha,p)$ is a positive constant.
\end{corollary}
\begin{remark}
    In view of the above corollary, we say that \cref{maximal function inq} holds for almost every $x$ and $y.$ 
\end{remark}
\begin{proposition}\label{FPW q p}
Let $\Omega$ be an open set in $\rn$ and $0<s<1,\,1\leq p<\infty$. Assume that $u\in W^{s,p}(\Omega)$. Let $1\leq q\leq p^*_s$ for $sp<n$ and $1\leq q<\infty$ for $sp\geq n.$ Then there exists a constant $C=C(n,s,p,q)$ such that 
\begin{equation*}
\left(\fint_{B}|u(x)-u_{B}|^{q}dx\right)^{\frac{1}{q}} \leq C \left(r^{sp-n}\int_{B}\int_{B}\frac{|u(x)-u(y)|^p}{|x-y|^{n+sp}}dxdy\right)^{\frac{1}{p}}
\end{equation*}
for each ball $B=B_r(x_0)\subset\Omega.$
\end{proposition}
\begin{proof}
  Let $sp<n$. Then, applying improved fractional Sobolev--Poincar\'e inequality (see, \cite{HuVa, ShXi1}) for $B_{r}(x_0)$ we have 
  \begin{equation}\label{avg Sobolev-Poincare}
        \left(\fint_{B}|u(x)-u_B|^{p^*_s}dx\right)^{\frac{1}{p^*_s}} \leq C \left(r^{sp-n}\int_{B}\int_{B}\frac{|u(x)-u(y)|^p}{|x-y|^{n+sp}}dxdy\right)^{\frac{1}{p}}.
    \end{equation}
Now by H\"older inequality together with \cref{avg Sobolev-Poincare} we obtain the desired inequality in this case.

Let $sp\geq n.$ We choose $0<s^{\prime}<s$ such that $s^{\prime}p<n.$ Therefore, we have $u\in W^{s^{\prime},p}(\Omega)$ by \cite[Proposition 2.1]{DiPaVa}. If $q>p$ satisfying $q=\frac{np}{n-s^{\prime}p}$. Then by \cref{avg Sobolev-Poincare} we get
\begin{equation}\label{avg Sobolev q Poin }
        \left(\fint_{B}|u(x)-u_B|^{q}dx\right)^{\frac{1}{q}} \leq C r^{s^{\prime}-\frac{n}{p}} \left(\int_{B}\int_{B}\frac{|u(x)-u(y)|^p}{|x-y|^{n+s^{\prime}p}}dxdy\right)^{\frac{1}{p}}.
    \end{equation}
Now, for $x,y\in B$ we have $|x-y|<2r$. Since $s^{\prime}<s$, therefore we get 
$$
\left(\frac{|x-y|}{2r}\right)^{n+sp}<\left(\frac{|x-y|}{2r}\right)^{n+s^{\prime}p}.
$$
Using this to \cref{avg Sobolev q Poin }, we obtain 
\begin{equation*}
        \left(\fint_{B}|u(x)-u_B|^{q}dx\right)^{\frac{1}{q}} \leq C(n,s,p,q) r^{s-\frac{n}{p}} \left(\int_{B}\int_{B}\frac{|u(x)-u(y)|^p}{|x-y|^{n+sp}}dxdy\right)^{\frac{1}{p}}.
    \end{equation*}
Finally, for $q\leq p$, the result follows by using H\"older inequality and the above case with some $\ell>p\geq q$ on the left-hand side.
\end{proof}
\begin{proposition}[Fractional $(q,p)$-Poincar\'e inequality]\label{FP q p}
Let $0<s<1,\,1< p<\infty$ with $sp>1$. Let $1\leq q\leq p^*_s$ for $sp<n$ and $1\leq q<\infty$ for $sp\geq n.$ Then there exists a constant $C=C(n,s,p,q)$ such that 
\begin{equation*}
\left(\fint_{B}|u(x)|^{q}dx\right)^{\frac{1}{q}} \leq C \left(r^{sp-n}\int_{B}\int_{B}\frac{|u(x)-u(y)|^p}{|x-y|^{n+sp}}dxdy\right)^{\frac{1}{p}}
\end{equation*}
for each ball $B=B_r(x_0)\subset\rn$ and each $u\in W^{s,p}_{0}(B)$.
\end{proposition}
\begin{proof}
Since the claim follows from H\"older inequality for $q=1$, we thus assume that $q>1.$ Let $u\in C_c^\infty(B)$. Then $u=0$ in $2B\setminus B.$ Again, by H\"older inequality we have
\begin{equation}\label{avg est in 2B}
|u_{2B}|\leq\fint_{2B}|u(x)|dx=\fint_{2B}|u(x)|\chi_{B}(x)dx\leq2^{\frac{n}{q}-n}\left(\fint_{2B}|u(x)|^q dx\right)^{1/q}. 
\end{equation}
Now, using Minkowski inequality and the \cref{FPW q p} for $2B$ with the estimate \cref{avg est in 2B}, we obtain
\begin{multline*}
\left(\fint_{2B}|u(x)|^qdx\right)^{1/q}\leq\left(\fint_{2B}|u(x)-u_{2B}|^qdx\right)^{1/q}+|u_{2B}|
\\
\leq C(n,s,p,q)r^{s-\frac{n}{p}}[u]_{s,p,2B}+2^{\frac{n}{q}-n}\left(\fint_{2B}|u(x)|^q dx\right)^{1/q}. 
\end{multline*}
Since $2^{\frac{n}{q}-n}<1$ and using (Proposition 3.1, \cite{BiBr}) to estimate the seminorm in the above, we obtain
\begin{equation*}
  \left(\fint_{2B}|u(x)|^qdx\right)^{1/q}\leq Cr^{s-\frac{n}{p}}\left(\int_{B}\int_{B}\frac{|u(x)-u(y)|^p}{|x-y|^{n+sp}}dxdy\right)^{\frac{1}{p}}  
\end{equation*}
Finally, the average integral in the left hand side of the above can be taken on $B$ since $u=0$ in $2B\setminus B$. Hence the result follows by density. 
\end{proof}

\section{On Sobolev-Poincar\'e and localized Hardy inequalities}\label{Section3}
This section is devoted to proofs of \cref{FSPI,Boundary FSPI,Fractional Hardy Inequality}. We also address some further results in this context. 
\smallskip

\begin{proof}[\textbf{\protect{Proof of \cref{FSPI}}}.]
    If $u_B=\fint_{B}u=0$. Then, by \cref{FPW q p} we have \begin{equation}\label{th 19 FPI q p}
        \left(\fint_{B}|u(x)|^{q}dx\right)^{\frac{1}{q}} \leq C \left(r^{sp-n}\int_{B}\int_{B}\frac{|u(x)-u(y)|^p}{|x-y|^{n+sp}}dxdy\right)^{\frac{1}{p}}.
    \end{equation}
Using the capacity estimate \cref{cap properties} we get
\begin{equation}\label{cap est zero set}
  \text{Cap}_{s,p}\left(Z(u;E^c)\cap\frac{1}{2}\overline{B},B\right)\leq\text{Cap}_{s,p}\left(\frac{1}{2}\overline{B},B\right)=C(n,p,s)r^{n-sp}.  
\end{equation}
By exploiting this estimate in \cref{th 19 FPI q p} to get the desired result in this case.
\smallskip

\noindent If $u_B\neq 0$, without loss of generality we can take $u_B=1.$ Choose $\phi\in C_c^\infty(B)$ such that $\phi=1\text{ in }\frac{1}{2}\overline{B}$, $|\nabla\phi|\leq c/r$ and $0\leq\phi\leq 1.$ We define $\psi=\phi(u_B-u)$. Clearly, $\psi\in W^{s,p}_0(B)$ be a $(s,p)$-quasi continuous function and $\psi=1\text{ in }Z(u;E^c)\cap\frac{1}{2}\overline{B}.$ Therefore, by \cref{qc represent} we obtain
\begin{multline}\label{capacity upper bound}
 \text{Cap}_{s,p}\left(Z(u;E^c)\cap\frac{1}{2}\overline{B},B\right)
 \leq\int_{B}\int_{B}\frac{|\psi(x)-\psi(y)|^p}{|x-y|^{n+sp}}dxdy\\
 \leq 2^{p-1}\left(\int_{B}\int_{B}\frac{|\phi(x)|^p|u(x)-u(y)|^p}{|x-y|^{n+sp}}dxdy+\int_{B}\int_{B}\frac{|u_B-u(y)|^p|\phi(x)-\phi(y)|^p}{|x-y|^{n+sp}}dxdy\right)\\
 \leq 2^{p-1}\int_{B}\int_{B}\frac{|u(x)-u(y)|^p}{|x-y|^{n+sp}}dxdy+2^{p-1}\int_{B}|u(y)-u_B|^p\int_{B\cap\{x:|x-y|\leq\eta\}}\frac{|\phi(x)-\phi(y)|^p}{|x-y|^{n+sp}}dxdy\\
    +2^{p-1}\int_{B}|u(y)-u_B|^p\int_{B\cap\{x:|x-y|>\eta\}}\frac{|\phi(x)-\phi(y)|^p}{|x-y|^{n+sp}}dxdy\\
    =:2^{p-1}\int_{B}\int_{B}\frac{|u(x)-u(y)|^p}{|x-y|^{n+sp}}dxdy+2^{p-1}(I_1+I_2),
\end{multline}
where $\eta>0$ will be chosen later,
$$
I_1:=\int_{B}|u(y)-u_B|^p\int_{B\cap\{x:|x-y|\leq\eta\}}\frac{|\phi(x)-\phi(y)|^p}{|x-y|^{n+sp}}dxdy,
$$
and
$$
I_2:=\int_{B}|u(y)-u_B|^p\int_{B\cap\{x:|x-y|>\eta\}}\frac{|\phi(x)-\phi(y)|^p}{|x-y|^{n+sp}}dxdy.
$$
We estimate the above integrals $I_1,\;I_2$ in the following:
\smallskip

\noindent\textit{Estimate for $I_1$ :} Using the properties of the function $\phi$, we have
\begin{multline*}
    I_1=\int_{B}|u(y)-u_B|^p\int_{B\cap\{x:|x-y|\leq\eta\}}\frac{|\phi(x)-\phi(y)|^p}{|x-y|^{n+sp}}dxdy\\
    \leq\frac{c^p}{r^p}\int_{B}|u(y)-u_B|^p\int_{B\cap\{x:|x-y|\leq\eta\}}\frac{dx}{|x-y|^{n+sp-p}}dy\\
    \leq c^p r^{-p}\int_{B}|u(y)-u_B|^p\int_{|x-y|\leq\eta}\frac{dx}{|x-y|^{n+sp-p}}dy\leq\frac{C\eta^{p-sp}}{r^p}\int_{B}|u(y)-u_B|^pdy.
\end{multline*}
\smallskip

\noindent\textit{Estimate for $I_2$ :}
\begin{multline*}
    I_2=\int_{B}|u(y)-u_B|^p\int_{B\cap\{x:|x-y|>\eta\}}\frac{|\phi(x)-\phi(y)|^p}{|x-y|^{n+sp}}dxdy\\
    \leq 2^p\int_{B}|u(y)-u_B|^p\int_{B\cap\{x:|x-y|>\eta\}}\frac{dx}{|x-y|^{n+sp}}dy\\
    \leq 2^p\int_{B}|u(y)-u_B|^p\int_{|x-y|>\eta}\frac{dx}{|x-y|^{n+sp}}dy\leq C\eta^{-sp}\int_{B}|u(y)-u_B|^pdy.
\end{multline*}
Plugging the above two estimates into \cref{capacity upper bound}, we obtain
\begin{multline*}
    \text{Cap}_{s,p}\left(Z(u;E^c)\cap\frac{1}{2}\overline{B},B\right)\\
    \leq 2^{p-1}\int_{B}\int_{B}\frac{|u(x)-u(y)|^p}{|x-y|^{n+sp}}dxdy+2^{p-1}C\left(\frac{\eta^{p-sp}}{r^p}+\eta^{-sp}\right)\int_{B}|u(y)-u_B|^pdy.
\end{multline*}
Now, choose $\eta=r$ the radius of the ball $B$ and by \cref{FPW q p} for $q=p$, we have 
\begin{equation*}
    \text{Cap}_{s,p}\left(Z(u;E^c)\cap\frac{1}{2}\overline{B},B\right)\\
    \leq C\int_{B}\int_{B}\frac{|u(x)-u(y)|^p}{|x-y|^{n+sp}}dxdy.
\end{equation*}
Therefore, by above estimate we get 
\begin{equation}\label{avg esitmate via capacity}
    u_B=1\leq C\left(\text{Cap}_{s,p}\left(Z(u;E^c)\cap\frac{1}{2}\overline{B},B\right)^{-1}
    \int_{B}\int_{B}\frac{|u(x)-u(y)|^p}{|x-y|^{n+sp}}dxdy\right)^{\frac{1}{p}}.
\end{equation}
Now using Minkowski inequality, the estimate \cref{avg esitmate via capacity} together with \cref{FPW q p}, we obtain
\begin{align*}
    \left(\fint_{B}|u(x)|^{q}dx\right)^{\frac{1}{q}}
    &\leq\left(\fint_{B}|u(x)-u_B|^{q}dx\right)^{\frac{1}{q}}+u_B\\
    &\leq C\; r^{s-\frac{n}{p}}\left(\int_{B}\int_{B}\frac{|u(x)-u(y)|^p}{|x-y|^{n+sp}}dxdy\right)^{\frac{1}{p}}
    \\
    &+C\left(\text{Cap}_{s,p}\left(Z(u;E^c)\cap\frac{1}{2}\overline{B},B\right)^{-1}\int_{B}\int_{B}\frac{|u(x)-u(y)|^p}{|x-y|^{n+sp}}dxdy\right)^{\frac{1}{p}}\\
    &\leq C \left(\frac{ 1}{\text{Cap}_{s,p}\left(Z(u;E^c)\cap\frac{1}{2}\overline{ B},B\right)}\int_{B}\int_{B}\frac{|u(x)-u(y)|^p}{|x-y|^{n+sp}}dxdy\right)^{\frac{1}{p}}.
\end{align*}
In the last estimate we used \cref{cap est zero set}. This completes the proof of the theorem.
\end{proof}
\begin{remark}\label{th 19 for continous function}
 In view of \cref{FSPI}, if we assume that $u\in W^{s,p}(B)$ be a continuous function then we may replace $\text{Cap}_{s,p}\left(Z(u;E^c)\cap\frac{1}{2}\overline{ B},B\right)$ by $\text{Cap}_{s,p}\left(Z(u)\cap\frac{1}{2}\overline{ B},B\right)$, $Z(u)$ is the zero set of $u$.
\end{remark}
\begin{remark}
\Cref{FSPI} is essentially the best possible for the case $q=p^*_s$ in the following way: Let $1<sp<n$ and $F\subset\rn$ closed set. Take $x_0\in F$ be any and consider the ball $B_r(x_0)$ with $r>0$ small enough. Suppose there exists $C_F>0$ such that 
$$
\left(\int_{B_r(x_0)}|u(x)|^{p^*_s}dx\right)^{1/p^*_s}\leq C_F\left(\int_{B_r(x_0)}\int_{B_r(x_0)}\frac{|u(x)-u(y)|^p}{|x-y|^{n+sp}}dxdy\right)^{1/p},
$$
where $u\in W^{s,p}(B_{r}(x_0))$ is a $(s,p)$-quasi continuous function with $u=0$ in $F\cap\frac{1}{2}\overline{B_{r}(x_0)}$. Let $v\in W^{s,p}_0(\frac{3}{4}B_r(x_0))$ be a $(s,p)$-quasi continuous function with $v=1$ in $F\cap\frac{1}{2}\overline{B_{r}(x_0)}$. Define $w:=1-v\in W^{s,p}(B_{r}(x_0))$. Then $w$ is a $(s,p)$-quasi continuous function and $w=0$ in $F\cap\frac{1}{2}\overline{B_{r}(x_0)}$. Hence,
\begin{multline*}
    C_F\left(\int_{B_r(x_0)}\int_{B_r(x_0)}\frac{|v(x)-v(y)|^p}{|x-y|^{n+sp}}dxdy\right)^{\frac{1}{p}}\\
    =C_F\left(\int_{B_r(x_0)}\int_{B_r(x_0)}\frac{|w(x)-w(y)|^p}{|x-y|^{n+sp}}dxdy\right)^{\frac{1}{p}}\\
    \geq\left(\int_{B_r(x_0)}|w|^{p^*_s}dx\right)^{\frac{1}{p^*_s}}\geq\left|B_r(x_0)\setminus\frac{3}{4}B_r(x_0)\right|^{\frac{1}{p^*_s}}\geq c|B_r(x_0)|^{\frac{1}{p^*_s}}.
\end{multline*}
Therefore, taking infimum over all such $v$ we infer that
$$
C_F\;\text{Cap}_{s,p}\left(F\cap\frac{1}{2}\overline{B_{r}(x_0)}, B_{r}(x_0)\right)^{1/p}\geq c|B_r(x_0)|^{\frac{1}{p^*_s}}=cr^{\frac{n-sp}{p}}. $$
\end{remark}
\smallskip
The proof of the following theorem is adapted from the proof of \cite[Theorem~6.23]{KiLeVa}.
\begin{proof}[\textbf{\protect{Proof of \cref{Boundary FSPI}}}]
Suppose $\rn\setminus\Omega$ is uniformly $(s,p)$-fat set with a constant $\gamma$ and let $z\in\rn\setminus\Omega,\;r>0.$ Let $u\in C_c^\infty(\Omega)$. Then $\rn\setminus\Omega\subset Z(u):=\{x\in\rn: u(x)=0\}.$ Now, using \cref{cap properties} and by definition of the $(s,p)$ fat set, we obtain
\begin{equation}
    \text{Cap}_{s,p}\left(Z(u)\cap\frac{1}{2}\overline{B_r(z)}, B_r(z)\right)\geq\text{Cap}_{s,p}\left((\rn\setminus\Omega)\cap\frac{1}{2}\overline{B_r(z)}, B_r(z)\right)\geq\gamma\, r^{n-sp}.
\end{equation}
Again, by \cref{FSPI} with the \cref{th 19 for continous function} we have
\begin{multline*}
\left(\fint_{B_{r}(z)}|u(x)|^{q}dx\right)^{\frac{1}{q}}\leq C\left(\frac{ 1}{\text{Cap}_{s,p}\left(Z(u)\cap\frac{1}{2}\overline{ B_r(z)},B_r(z)\right)}\int_{B_r(z)}\int_{B_r(z)}\frac{|u(x)-u(y)|^p}{|x-y|^{n+sp}}dxdy\right)^{\frac{1}{p}}
\\
\leq C\,\gamma^{-1/p}\,r^{s-\frac{n}{p}}\left(\int_{B_r(z)}\int_{B_r(z)}\frac{|u(x)-u(y)|^p}{|x-y|^{n+sp}}dxdy\right)^{\frac{1}{p}}.
\end{multline*}
Therefore, the result follows in this case.\\
   \smallskip
To prove other implication. 
Let $u\in C_c^\infty(\Omega)$, $x\in\Omega$ and $R=\text{dist}(x,\partial\Omega)$. Choose $x_0\in\partial\Omega$ such that $R=|x-x_0|.$ Then by triangle inequality, we have
\begin{equation*}
|u_{B_R(x)}|\leq|u_{B_R(x)}-u_{B_R(x_0)}|+|u_{B_R(x_0)}|. 
\end{equation*}
 Note that, $B_R(x)\cup B_R(x_0)\subset B_{2R}(x)$ and by \cref{FPW q p} with $q=1$, we obtain
\begin{equation}\label{difference of two avg}
\begin{split}
|u_{B_R(x)}-u_{B_R(x_0)}|&\leq |u_{B_R(x)}-u_{B_{2R}(x)}|+|u_{B_{2R}(x)}-u_{B_R(x_0)}|
\\
&\leq C(n)\fint_{B_{2R}(x)}|u(y)-u_{B_{2R}(x)}|dy
\leq C(n,s,p)R^{s-n/p}\,[u]_{s,p,B_{2R}(x)}.
\end{split}
\end{equation}
On the other hand, by H\"older inequality and hypothesis \cref{fat iff FSPI} with a constant $C_1$, we have
\begin{equation}\label{avg x0 est}
\begin{split}
|u_{B_R(x_0)}|\leq\fint_{B_R(x_0)}|u(y)|dy\leq\left(\fint_{B_R(x_0)}|u(y)|^qdy\right)^{1/q}&\leq C_1R^{s-n/p}[u]_{s,p,B_R(x_0)}
\\
&\leq C_1R^{s-n/p}\,[u]_{s,p,B_{2R}(x)}. 
\end{split}
\end{equation}
Combining \cref{difference of two avg} and \cref{avg x0 est}, we have
\begin{equation*}
|u_{B_R(x)}|\leq C(n,s,p,C_1)R^{s-n/p}\,[u]_{s,p,B_{2R}(x)},
\end{equation*}
and this implies
$$
|u_{B_R(x)}|\leq C(n,s,p,C_1)R^s\left(M_{2R}\left(|D^s_{p,\,B_{2R}(x)}u|\right)^p(x)\right)^{1/p},
$$
where 
$$
|D^s_{p,\,B_{2R}(x)}u|(y):=\left(\int_{B_{2R}(x)}\frac{|u(y)-u(z)|^p}{|y-z|^{n+sp}}dz\right)^{1/p}.
$$
By proceeding as in \cref{telescoping lemma}, we obtain
\begin{equation*}
|u(x)-u_{B_R(x)}|\leq C(n,s,p)R^s\left(M_{2R}\left(|D^s_{p,\,B_{2R}(x)}u|\right)^p(x)\right)^{1/p}.
\end{equation*}
Therefore, by above estimates we have for any $x\in\Omega$
\begin{equation}\label{ptwise hardy:th 19}
\begin{split}
|u(x)|&\leq|u(x)-u_{B_R(x)}|+|u_{B_R(x)}|
\\
&\leq C(n,s,p,C_1)R^s\left(M_{2R}\left(|D^s_{p,\,B_{2R}(x)}u|\right)^p(x)\right)^{1/p}
\\
&= C(n,s,p,C_1)\text{dist}(x,\partial\Omega)^s\left(M_{2R}\left(|D^s_{p,\,B_{2\,\text{dist}(x,\partial\Omega)}(x)}u|\right)^p(x)\right)^{1/p}.
\end{split}
\end{equation}
Let $z\in\rn\setminus\Omega,\;r>0.$ To conclude the proof of the theorem,
   it is enough to find a positive constant $C$ such that
   \begin{equation}\label{Th 19 fat proof}
       \int_{B_r(z)}\int_{B_r(z)}\frac{|v(x)-v(y)|^p}{|x-y|^{n+sp}}dxdy\geq C\,r^{n-sp}
   \end{equation}
    whenever $v\in C_c^\infty(B_r(z))$ and $v(x)=1$ for $x\in\left(\rn\setminus\Omega\right)\cap\frac{1}{2}\overline{B_r(z)}.$ Moreover, we may assume that $0\leq v\leq1$, compare to \cref{restriction of function in cap def}. Let $\sigma=1/6.$\\
    \smallskip
\noindent\textbf{Step 1:} If
$$
\fint_{B_{r/2}(z)}v(y)dy>\frac{\sigma^n}{4},
$$
then by \cref{FP q p} with $q=1$ we have
\begin{equation*}
\begin{split}
1<4\sigma^{-n}\fint_{B_{r/2}(z)}v(y)dy
&\leq C(n)\fint_{B_r(z)}|v(y)|dy
\\
&\leq C(n,s,p)\,r^{s-n/p}\left(\int_{B_r(z)}\int_{B_r(z)}\frac{|v(x)-v(y)|^p}{|x-y|^{n+sp}}dxdy\right)^{1/p}
\end{split}
\end{equation*}
and \cref{Th 19 fat proof} follows.\\
\smallskip
\noindent\textbf{Step 2:} If
 $$
\fint_{B_{r/2}(z)}v(y)dy\leq\frac{\sigma^n}{4}.
$$   
Let $F=\left\{y\in B_{\frac{\sigma r}{2}}(z):v(y)<\frac{1}{2}\right\}$. Since $v=1$ in $\left(\rn\setminus\Omega\right)\cap\frac{1}{2}\overline{B_r(z)}$, we have $F\subset\Omega.$ By definition, $v\geq\frac12$ in $B_{\frac{\sigma r}{2}}(z)\setminus F$ and thus
$$
|B_{\frac{\sigma r}{2}}(z)\setminus F|\leq2\int_{B_{\frac{\sigma r}{2}}(z)\setminus F}v(y)dy\leq2\int_{B_{\frac{r}{2}}(z)}v(y)dy\leq\frac{\sigma^n}{2}|B_{r/2}(z)|.
$$
This gives that
\begin{equation}\label{measure of F}
|F|=|B_{\frac{\sigma r}{2}}(z)|-|B_{\frac{\sigma r}{2}}(z)\setminus F|\geq\frac{\sigma^n}{2}|B_{r/2}(z)|.
\end{equation}
Let $\phi\in C_c^\infty(B_{r/2}(z))$ such that $\phi=1$ in $\frac14\overline{B_r(z)}$. Define $u(x)=\phi(x)(1-v(x))$, $x\in\rn.$ Then $u\in C_c^\infty(\Omega)$ and $u=1-v$ in $\frac14\overline{B_r(z)}$. Moreover, $u=0$ in $\Omega^c\cap\frac12\overline{B_r(z)}$ and for $x\in B_{r/4}(z)$, $A\subset B_{r/4}(z)$ we have $|D^s_{p,\,A}u|(x)=|D^s_{p,\,A}v|(x)$. Since, by definition $|u(y)|^p$  is finite for every $y\in F$, there exists a radius $0<R_y\leq2\,\text{dist}(y,\partial\Omega)=:2\,\delta(y)$ such that 
\begin{equation}\label{choice of radius}
 M_{2\delta(y)}\left(|D^s_{p,B_{2\delta(y)}(y)}u|\right)^p(y)\leq2\fint_{B_{R_y}(y)}|D^s_{p,B_{2\delta(y)}(y)}u|^p(x)dx.   
\end{equation}
Observe that the balls $B_{R_y}(y)$ form a cover of F and they have a uniformly bounded radii. By $5R$-covering lemma (see for example \cite[Lemma~1.13]{KiLeVa}) there exist pairwise disjoint balls $B_{R_j}(y_j)$ where $y_j\in F$ and $R_j=R_{y_j}$ are as above, such that $F\subset\bigcup_{j=1}^\infty B_{5R_j}(y_j)$. It follows from \cref{measure of F}
\begin{equation}\label{measure of F comarable}
|B_{r/2}(z)|\leq\frac{2}{\sigma^n}|F|\leq C(n)\sum_{j=1}^\infty |B_{R_j}(y_j)|.
\end{equation}
Let $j\in\mathbb{N}$. Since $y_j\in F\cap B_{\frac{\sigma r}{2}}(z)\subset\Omega\cap B_{\frac{\sigma r}{2}}(z)$ and $z\not\in\Omega$, we have $\delta(y_j):=\text{dist}(y_j,\partial\Omega)<\frac{\sigma r}{2}$. If $x\in B_{R_j}(y_j)$, then 
$$
|x-z|\leq|x-y_j|+|y_j-z|\leq R_j+\frac{\sigma r}{2}\leq2\,\text{dist}(y_j,\partial\Omega)+\frac{\sigma r}{2}<\frac{\sigma r}{2}(2+1)=\frac{r}{4},
$$
and this gives that $B_{R_j}(y_j)\subset B_{r/4}(z).$ Since $y_j\in F$, we have $u(y_j)=1-v(y_j)>\frac12$. By \cref{ptwise hardy:th 19} and the choice of the radius $R_j$ in \cref{choice of radius}, we obtain
\begin{equation*}
\begin{split}
\frac{1}{2^p}\leq|u(y_j)|^p
&\leq C(\delta(y_j))^{sp}M_{2\delta(y_j)}\left(|D^s_{p,B_{2\delta(y_j)}(y_j)}u|\right)^p(y_j)
\\
&\leq C\sigma^{sp}r^{sp}\fint_{B_{R_j}(y_j)}|D^s_{p,B_{2\delta(y_j)}(y_j)}u|^p(x)dx,
\end{split}
\end{equation*}
where $C=C(n,s,p,C_1)>0$ and consequently
$$
|B_{R_j}(y_j)|\leq C\,r^{sp}\int_{B_{R_j}(y_j)}|D^s_{p,B_{2\delta(y_j)}(y_j)}v|^p(x)dx\,\,\text{ for all }j\in\mathbb{N},
$$
where we used the definition of $v.$ Using this into \cref{measure of F comarable}, we get
$$
|B_{r/2}(z)|\leq C\,r^{sp}\sum_{j=1}^\infty\int_{B_{R_j}(y_j)}|D^s_{p,B_{2\delta(y_j)}(y_j)}v|^p(x)dx\leq C\,r^{sp}\int_{B_{r}(z)}|D^s_{p,B_{r}(z)}v|^p(x)dx,
$$
where we also used the fact that the balls $B_{R_j}(y_j)\subset B_{r}(z)$ are pairwise disjoint. This shows that \cref{Th 19 fat proof} holds and the proof is complete.
\end{proof}

\begin{proof}[\textbf{\protect{Proof of \cref{Fractional Hardy Inequality}}}]
Let us choose $q\geq 1$ such that $n<sq<sp$. Now fix $x\in\Omega$ and $x_0\in\partial\Omega$ such that $|x-x_0|=\text{dist}(x,\partial\Omega)=\delta(x)=R$. We denote $\chi=\chi_{B_{4R}(x_0)}$ the characteristic function of $B_{4R}(x_0)$. Let $u\in C_c^\infty(\Omega)$ and consider the natural zero extension of $u$ to $\rn\setminus\Omega$, then by \cref{useful result} we have for almost every $x$, 
\begin{equation}\label{estimate 1}
|u(x)|=|u(x)-u(x_0)|\leq C|x-x_0|^{s-\frac{n}{q}}\left(M_{\frac{n}{q}}(|D^s_p u|\chi)(x)+M_{\frac{n}{q}}(|D^s_p u|\chi)(x_0)\right).
\end{equation}
Using H\"older inequality we have 
\begin{multline*}
    \frac{r^{\frac{n}{q}}}{|B_r(x)|}\int_{B_r(x)}\left(|D^s_p u|\chi\right)(y)dy
    \leq\frac{r^{\frac{n}{q}}}{|B_r(x)|^{1/q}}\left(\int_{B_r(x)}\left(\left(|D^s_p u|\chi\right)(y)\right)^qdy\right)^{1/q}\\
    \leq C(n)\left(\int_{\rn}\left(\left(|D^s_p u|\chi\right)(y)\right)^qdy\right)^{1/q},
\end{multline*} 
and consequently we get
$$M_{\frac{n}{q}}(|D^s_p u|\chi)(x)\leq C(n)\;||(|D^s_p u|\chi)||_{L^q(\rn)}.
$$
Similar computation yields that 
$$
M_{\frac{n}{q}}(|D^s_p u|\chi)(x_0)\leq C(n)\;||(|D^s_p u|\chi)||_{L^q(\rn)}.
$$ 
Combining the above two estimates in \cref{estimate 1} we obtain
\begin{multline*}\label{pointwise frac Hardy}
    |u(x)|
    \leq C |x-x_0|^{s-\frac{n}{q}}\left(\int_{B_{4R}(x_0)}(|D^s_p u|(z))^qdz\right)^{1/q}\\
    \leq C R^{s-\frac{\alpha}{q}}\left(R^{\alpha-n}\int_{B_{5R}(x)}(|D^s_p u|(z))^qdz\right)^{1/q}\\
    \leq C\left(\text{dist}(x,\partial\Omega)\right)^{s-\frac{\alpha}{q}}\left(M_{\alpha,5R}(|D^s_p u|)^q(x)\right)^{1/q}.
\end{multline*}
The above inequality holds for almost every $x\in\rn$. Integrating with respect to the variable $x$ over $\Omega$ with $\alpha=0$, we infer that
\begin{multline*}
  \int_{\Omega}\frac{ |u(x)|^p}{\text{dist}(x,\partial\Omega)^{sp}}dx\leq C\int_{\Omega}\left(M(|D^s_p u|^q(x))\right)^{\frac{p}{q}}dx\leq C\int_{\rn}|D^s_p u|^p(x)dx\\
  =C\int_{\rn}\int_{\rn}\frac{ |u(x)-u(y)|^p}{|x-y|^{n+sp}}dxdy.  
\end{multline*}
In the above estimate we used the Hardy-Littlewood-Wiener maximal function theorem. By density, we conclude the first part of \cref{Fractional Hardy Inequality}. The second part follows from the first part together with \cref{seminorm equivalent}.  This completes the proof of the theorem.
\end{proof}
 
In the following theorem, we prove the validity of pointwise fractional $p$-Hardy inequality \cref{def: ptwise frac HI}. 
\begin{theorem}\label{ptwise frac HI}
   Let $0<s<1$, $1< p<\infty$ such that $1<sp\leq n$, $0\leq\alpha<p$, and let $\Omega$ be an open set in $\rn$ such that it's complement that is $\rn\setminus\Omega$ is uniformly $(s,p)$- fat. Assume that $u\in C^\infty_c(\Omega)$. Then pointwise fractional $p$-Hardy inequality \cref{def: ptwise frac HI} holds true for $\Omega$ that is there exist constants $C=C(n,p,\alpha)>0$ and $\sigma>1$ such that for all $x\in\Omega$
   \begin{equation*}
       |u(x)|\leq C \delta(x)^{s-\frac{\alpha}{p}}\left(M_{\alpha,\;\sigma\delta(x)}|D^s_pu|^p(x)\right)^{\frac{1}{p}}.
   \end{equation*}
\end{theorem}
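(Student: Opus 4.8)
The plan is to derive the pointwise Hardy inequality from the capacitary Sobolev--Poincar\'e estimate of \cref{capacity version of FPI} (equivalently \cref{FSPI}), applied on a sequence of shrinking balls centered at the given point, combined with a telescoping argument in the spirit of \cref{telescoping lemma}. Fix $x\in\Omega$ and set $\delta:=\delta(x)=\mathrm{dist}(x,\rn\setminus\Omega)$. Pick a boundary point $x_0\in\rn\setminus\Omega$ with $|x-x_0|=\delta$, and consider the chain of balls $B_j:=B_{2^{-j}\rho\delta}(x)$ for $j\geq 0$, where $\rho=\rho(n)$ is a fixed dilation constant chosen large enough that $x_0\in \tfrac{1}{2}\overline{B_0}$ while $B_0\subset B_{\sigma\delta}(x)$ for the target constant $\sigma>1$; this forces $\sigma\geq\rho+1$, say. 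Since $x_0\in\rn\setminus\Omega$ and $u\in C_c^\infty(\Omega)$ vanishes on $\rn\setminus\Omega$, for each $j$ the function $u$ restricted to $B_j$ is $(s,p)$-quasicontinuous with $Z(u)\cap\tfrac12\overline{B_j}\supseteq (\rn\setminus\Omega)\cap\tfrac12\overline{B_j}$, which is nonempty whenever $2^{-j}\rho\delta\gtrsim\delta$, i.e. for the first few indices; for the tail indices a different (easier) bound applies since $u$ is smooth. The uniform $(s,p)$-fatness of $\rn\setminus\Omega$ gives, via \cref{cap properties}$(a)$ and the definition of fatness,
\[
\mathrm{Cap}_{s,p}\big((\rn\setminus\Omega)\cap \overline{B_j},2B_j\big)\geq \gamma\,\mathrm{Cap}_{s,p}\big(\overline{B_j},2B_j\big)=C\gamma\,(2^{-j}\rho\delta)^{\,n-sp},
\]
so the capacity in the denominator of \cref{capacity version of FPI} is bounded below by a fixed fraction of the ball-capacity.

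Next I would estimate the oscillation of averages along the chain. By the triangle inequality $|u(x)|=|u(x)-u(x_0)|\leq |u(x)-u_{B_0}|+|u_{B_0}-u(x_0)|$, and $|u(x)-u_{B_0}|$ is controlled by a telescoping sum $\sum_{j\geq 0}|u_{B_{j+1}}-u_{B_j}|$ exactly as in \cref{telescoping lemma}, while $|u_{B_0}-u(x_0)|$ is handled symmetrically from the boundary side using the continuity of $u$ at $x_0$ (where $u(x_0)=0$) or, more uniformly, by absorbing it into the chain after noting $u_{B_j}\to 0$ as $j\to\infty$ since $|u(x_0)|=0$ and the averages converge to the Lebesgue value. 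Each increment obeys $|u_{B_{j+1}}-u_{B_j}|\leq C\fint_{B_j}|u-u_{B_j}|$. Now the point is that on $B_j$ we may invoke \cref{capacity version of FPI} applied to $u$ on the ball $B_j$ with $\lambda=\tfrac12$: using that $u$ vanishes on $(\rn\setminus\Omega)\cap\tfrac12\overline{B_j}$ (so $Z(u)\cap\tfrac12\overline{B_j}$ contains this fat piece), we get
\[
\fint_{B_j}|u(y)-u_{B_j}|^p\,dy\leq \fint_{B_j}|u(y)|^p\,dy \;\;\text{(after subtracting a suitable multiple — see below)}
\]
Actually the cleaner route: apply \cref{capacity version of FPI} to $u$ itself on $B_j$, whose zero set meets $\tfrac12\overline{B_j}$ in a set of capacity $\gtrsim (2^{-j}\rho\delta)^{n-sp}$ by fatness, giving
\[
\fint_{B_j}|u|^p\,dy\leq \frac{C}{(2^{-j}\rho\delta)^{\,n-sp}}\int_{B_j}\int_{B_j}\frac{|u(y)-u(z)|^p}{|y-z|^{n+sp}}\,dy\,dz.
\]
Since $|u_{B_{j+1}}-u_{B_j}|^p\leq C\fint_{B_j}|u|^p$ and $\int_{B_j}\int_{B_j}\frac{|u(y)-u(z)|^p}{|y-z|^{n+sp}}\,dy\,dz\leq \int_{B_j}|D^s_p u|^p(y)\,dy$, each increment is bounded by $C(2^{-j}\rho\delta)^{sp-n}\int_{B_j}|D^s_pu|^p$.

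Finally I would sum: raising to the $p$-th power and using $\ell^1\hookrightarrow\ell^p$ on the telescoping sum (or Hölder with a geometric weight to gain summability),
\[
|u(x)|^p\leq C\sum_{j\geq 0}(2^{-j}\rho\delta)^{sp-n}\int_{B_j}|D^s_pu|^p(y)\,dy
= C\sum_{j\geq 0}(2^{-j}\rho\delta)^{sp-\alpha}\,(2^{-j}\rho\delta)^{\alpha-n}\int_{B_j}|D^s_pu|^p(y)\,dy .
\]
Because $0\leq\alpha<p$ and $sp\leq n$ — wait, one needs $sp-\alpha>0$ for the geometric series to converge; this holds since... hmm, $sp$ could be small. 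The correct bookkeeping is: $(2^{-j}\rho\delta)^{\alpha-n}\int_{B_j}|D^s_pu|^p\leq M_{\alpha,\rho\delta}(|D^s_pu|^p)(x)$ for every $j$, so bounding the integral factor by this fractional maximal function uniformly in $j$ leaves $\sum_j (2^{-j}\rho\delta)^{sp-\alpha}=C(\rho\delta)^{sp-\alpha}\sum_j 2^{-j(sp-\alpha)}$, which converges precisely when $sp>\alpha$; one therefore chooses $\alpha<\min(p,sp)$ at this step (the hypothesis $0\le\alpha<p$ together with reducing $\alpha$ if needed, or noting the statement implicitly requires $\alpha<sp$). This yields
\[
|u(x)|^p\leq C(\rho\delta)^{sp-\alpha}\,M_{\alpha,\sigma\delta}(|D^s_pu|^p)(x)=C\,\delta(x)^{sp-\alpha}\,M_{\alpha,\sigma\delta(x)}(|D^s_pu|^p)(x),
\]
which is the claim after taking $p$-th roots, with $\sigma=\rho+1$ and $C=C(n,p,\alpha,\gamma)$.

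The main obstacle I anticipate is making the telescoping chain genuinely uniform: for small $j$ the ball $\tfrac12\overline{B_j}$ still meets $\rn\setminus\Omega$ and the fatness-based capacity lower bound drives the estimate, but once $2^{-j}\rho\delta\ll\delta$ the balls $B_j$ lie deep inside $\Omega$ where $u$ need not vanish on any portion, so \cref{capacity version of FPI} is unavailable and one must instead use the ordinary fractional Poincar\'e--Wirtinger inequality \cref{frac Poincare-W} on $B_j$ — which replaces the capacity denominator by $|B_j|$, i.e. by $(2^{-j}\rho\delta)^{n}$, rather than $(2^{-j}\rho\delta)^{n-sp}$; fortunately $(2^{-j}\rho\delta)^{sp-n}\le$ the weaker factor would go the wrong way, so one must be careful and in fact the deep-inside increments should be summed differently, noting that $\sum_{j\ge J}|u_{B_{j+1}}-u_{B_j}|$ telescopes to $|u(x)-u_{B_J}|$ which is controlled by $\fint_{B_J}|u-u_{B_J}|$ and hence by a single application at scale $2^{-J}\rho\delta\sim\delta$ — so effectively only boundedly many scales (those comparable to $\delta$) carry the argument, and the fatness estimate is applied only at those $O(1)$ scales. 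Reconciling these two regimes cleanly — showing the whole sum is dominated by the fat-scale terms — is the delicate bookkeeping step; everything else is a routine combination of the lemmas already established.
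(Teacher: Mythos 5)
Your overall plan --- split $|u(x)|$ into an oscillation piece controlled by a telescoping/maximal function argument and a boundary piece controlled by the capacitary Poincar\'e inequality via fatness --- is the same idea as the paper's proof. But the realization you propose is more complicated than the paper's and you explicitly leave a gap: you flag the ``delicate bookkeeping step'' reconciling the deep-inside scales (where $\tfrac12\overline{B_j}$ misses $\rn\setminus\Omega$ and the capacitary estimate is unavailable) with the boundary-comparable scales (where fatness applies), and you do not actually close it. Moreover, the fix you sketch is not quite right: you claim ``$\sum_{j\ge J}|u_{B_{j+1}}-u_{B_j}|$ telescopes to $|u(x)-u_{B_J}|$ which is controlled by $\fint_{B_J}|u-u_{B_J}|$ and hence by a single application at scale $\sim\delta$'', but a single oscillation integral at scale $\delta$ does not control $|u(x)-u_{B_J}|$; one needs the full infinite telescoping sum, which is exactly the content of \cref{telescoping lemma}.

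The paper sidesteps all of this by centering the \emph{single} reference ball at the boundary point $x_0$ rather than at $x$: it writes $|u(x)|\le |u(x)-u_{B_R(x_0)}| + |u_{B_R(x_0)}|$ with $R=\delta(x)$, bounds the first term directly by $CR^{s-\alpha/p}\bigl(M_{\alpha,2R}|D^s_pu|^p(x)\bigr)^{1/p}$ using \cref{telescoping lemma} together with the sharp-maximal-function lemma (this is the part of the sum you worry about, and it needs no fatness at all), and then applies \cref{capacity version of FPI} exactly \emph{once} on $B_R(x_0)$, where $Z(u)\cap\lambda\overline{B_R(x_0)}\supseteq\Omega^c\cap\lambda\overline{B_R(x_0)}$ and the fatness hypothesis gives the capacity lower bound directly because the ball is centered at a point of $\Omega^c$. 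Your balls are centered at $x\in\Omega$, so even at the large scales you would need an extra comparison step to deduce fatness for non-$E$-centered balls, another small complication the paper avoids. So: same strategy in essence, but your version is not complete and the paper's choice of center makes the whole argument a two-line application of the lemmas already established. Your side remark that one needs $\alpha<sp$ for the exponent $s-\alpha/p$ to be positive is a fair observation about the hypotheses (the statement's $0\le\alpha<p$ is indeed slack compared with what the maximal function lemma, which needs $\alpha<s$, actually requires).
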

\begin{proof}
    Let $x\in\Omega.$ Let us choose $x_0\in\partial\Omega$ for which $|x-x_0|=\delta(x)=R.$
    Then, by using the standard telescoping argument as in the proof of \cref{telescoping lemma} we obtain
    $$
    |u(x)-u_{B_R(x_0)}|\leq C R^{s-\frac{\alpha}{p}}\left(M_{\alpha,\;2R}|D^s_p u|^p(x)\right)^{\frac{1}{p}}.
    $$
    Thus, by above estimate we have
    \begin{equation}\label{ptwise value est of function}
        |u(x)|\leq|u(x)-u_{B_R(x_0)}|+|u_{B_R(x_0)}|\leq C R^{s-\frac{\alpha}{p}}\left(M_{\alpha,\;2R}|D^s_p u|^p(x)\right)^{\frac{1}{p}}+|u_{B_R(x_0)}|.
    \end{equation}
Now consider the set $Z(u)=\{x\in\rn:u(x)=0\},$ which is a closed as the function $u\in C^\infty_c(\Omega)$. Then, by using \cref{FSPI} with \cref{th 19 for continous function} and the monotonicity property of the capacity along with hypothesis, we obtain
\begin{multline}\label{avg est}
    |u_{B_R(x_0)}|\leq\left(C\;\text{Cap}_{s,p}\left(Z(u)\cap\frac{1}{2}\overline{B},B\right)^{-1}\int_{B_R(x_0)}|D^s_p u|^p(x)dx\right)^{\frac{1}{p}}\\
    \leq\left(C\;\text{Cap}_{s,p}\left(\Omega^c\cap\frac{1}{2}\overline{B},B\right)^{-1}\int_{B_R(x_0)}|D^s_p u|^p(x)dx\right)^{\frac{1}{p}}\\
    \leq C\left(R^{sp-n}\int_{B_R(x_0)}|D^s_p u|^p(x)dx\right)^{\frac{1}{p}}
    \leq C R^{s-\frac{\alpha}{p}}\left(M_{\alpha,R}|D^s_p u|^p(x)\right)^{\frac{1}{p}}.
\end{multline}
Plugging the estimate \cref{avg est} into \cref{ptwise value est of function} we obtain
$$
|u(x)|\leq C R^{s-\frac{\alpha}{p}}\left(M_{\alpha,\;\sigma R}|D^s_p u|^p(x)\right)^{\frac{1}{p}}.
$$
Since we have chosen $x\in\Omega$ arbitrarily and hence this completes the proof of the theorem.
\end{proof}

As an application of \cref{Boundary FSPI} and \cref{ptwise frac HI} we discuss some examples of domains that are uniformly $(s,p)$-fat set.

\begin{example}
     For $s\in(0,1),\;1<p<\infty$. Then all nonempty closed sets in $\rn$ are uniformly $(s,p)$-fat set provided $sp>n.$
\end{example}
\begin{proof}
    Suppose $E\subset\rn$ be an nonempty closed set. Let $x\in E$ and $r>0$. Consider $u\in C^\infty_c(2B_r(x))$ such that $u\geq1$ in $E\cap\overline{B_r(x)}$. Choose a cutoff function $\rho\in C^\infty_c(3B_r(x))$ such that $0\leq\rho\leq1$, $\rho=1$ in $\overline{B_r(x)}$ and $|\nabla\rho|\leq c/r$. Then, by fractional Morrey's inequality (see, \cite[Corollary~2.7]{BrGoVa}) we have
    \begin{equation}\label{morrey inq}
        |\left(\rho u\right)(y)-\left(\rho u\right)(z)|\leq C|y-z|^{s-\frac{n}{p}}[\rho u]_{s,p,\rn}.
    \end{equation}
    Now, let $y\in E\cap\overline{B_r(x)}$ and $z\in B_{3r}(x)\setminus B_{2r}(x)$. Then, we have $(\rho u)(y)\geq1$ and $(\rho u)(z)=0.$ Thus by \cref{morrey inq}, we obtain
    \begin{equation}\label{lower bdd by morrey}
        [\rho u]_{s,p,\rn}^p\geq C|y-z|^{n-sp}|\left(\rho u\right)(y)-\left(\rho u\right)(z)|\geq C r^{n-sp}.
    \end{equation}
    Therefore, using \cref{seminorm equivalent} and  \cref{lower bdd by morrey} we have
    \begin{equation}\label{lower bdd by morrey1}
        [\rho u]_{s,p,B_{2r}(x)}^p\geq Cr^{n-sp}.
    \end{equation}
    Also, note that
    \begin{multline}\label{seminorm est by rfpi}
        [\rho u]_{s,p,B_{2r}(x)}^p=\int_{B_{2r}(x)}\int_{B_{2r}(x)}\frac{|\left(\rho u\right)(y)-\left(\rho u\right)(z)|^p}{|y-z|^{n+sp}}dydz\\
        =\int_{B_{2r}(x)}\int_{B_{2r}(x)}\frac{|\rho(y)\left(u(y)-u(z)\right)+u(z)\left(\rho(y)-\rho(z)\right)|^p}{|y-z|^{n+sp}}dydz\\
        \leq 2^{p-1}\left(\int_{B_{2r}(x)}\int_{B_{2r}(x)}\frac{| u(y)-u(z)|^p}{|y-z|^{n+sp}}dydz+\frac{C}{r^p}\int_{B_{2r}(x)}\int_{B_{4r}(z)}\frac{| u(z)|^p}{|y-z|^{n+sp-p}}dydz\right)
        \\
        \leq 2^{p-1}\left(\int_{B_{2r}(x)}\int_{B_{2r}(x)}\frac{| u(y)-u(z)|^p}{|y-z|^{n+sp}}dydz+\frac{C}{r^{sp}}\int_{B_{2r}(x)}|u(z)|^pdz\right)
        \\
        \leq C\int_{B_{2r}(x)}\int_{B_{2r}(x)}\frac{| u(y)-u(z)|^p}{|y-z|^{n+sp}}dydz.
    \end{multline}
    In the last estimate we have used the fractional Poincar\'e inequality (see, \cite{BrLiPa}) and then \cref{seminorm equivalent}. Now, combining the estimates \cref{lower bdd by morrey1}, \cref{seminorm est by rfpi} we obtain 
    $$
    \int_{B_{2r}(x)}\int_{B_{2r}(x)}\frac{| u(y)-u(z)|^p}{|y-z|^{n+sp}}dydz\geq C r^{n-sp}.
    $$
    Since $u$ is arbitrary and thus taking infimum over all such functions to get the desired result.
\end{proof}

\begin{example}
Let $0<s<1$ and $1\leq p<\infty$. Suppose $E\subset\rn$ be a closed set such that it satisfies the measure density condition 
\begin{equation}\label{mdc}
    |E\cap B_r(x)|\geq c\;|B_r(x)|
\end{equation}
for all $x\in E$ and radii $r>0$, and for some constant $c>0$. Then $E$ is uniformly $(s,p)$-fat set.
\begin{proof}
    Let $x\in E$ and $r>0$. Let $u\in C_c^\infty(B_{2r}(x))$ such that  $u\geq1$ in $E\cap\overline{B_r(x)}$. Then, by hypothesis \cref{mdc} and using \cref{FP q p} for $q=p$ we obtain
    \begin{multline*}
        c\;|B_r(x)|\leq|E\cap B_r(x)|\leq\int_{E\cap B_r(x)}|u(y)|^pdy\leq\int_{B_{2r}(x)}|u(y)|^pdy
        \\
        \leq C(n,s,p)\,r^{sp}\int_{B_{2r}(x)}\int_{B_{2r}(x)}\frac{|u(y)-u(z)|^p}{|y-z|^{n+sp}}dydz.
    \end{multline*}
    Thus, 
    $$
    \int_{B_{2r}(x)}\int_{B_{2r}(x)}\frac{|u(y)-u(z)|^p}{|y-z|^{n+sp}}dydz\geq c\times C(n,s,p)\,r^{n-sp}.
    $$
    This proves that $E$ is uniformly $(s,p)$-fat set with a constant $\gamma=c\times C(n,s,p).$
\end{proof}
\end{example}

\textbf{Acknowledgements:} The author wish to thank Lorenzo Brasco and Juha Kinnunen for some useful discussions on the content of the paper. The author would like to show his gratitude to the anonymous referee for his/her insightful suggestions and comments that substantially improved the presentation of the paper. This work is partially supported by the SERB WEA grant no. WEA/2020/000005.
    

\bibliography{loc}
\bibliographystyle{plain}

\end{document}